\newtheorem{Assumption}{Assumption}
\newtheorem{Lemma}{Lemma}
\newtheorem{Theorem}{Theorem}
\newtheorem{Example}{Example}
\newlength{\wdth}
\def\phi{\varphi}
\title{Estimation and Application of the Convergence Bounds for Nonlinear Markov Chains}
\author{Kaichen Xu \footnote{Zhongnan University of Economics and Law, Wuhan, China; email: Kaichenxu358@gmail.com}}
\begin{document}

\maketitle

\begin{abstract}
Nonlinear Markov Chains (nMC) are regarded as the original (linear) Markov Chains with nonlinear small perturbations. It fits real-world
data better, but its associated properties are difficult to describe. A new approach is proposed to analyze the ergodicity and even 
estimate the convergence bounds of nMC, which is more precise than existing results. In the new method, Coupling Markov about homogeneous
Markov chains is applied to reconstitute the relationship between distribution at any times and the limiting distribution. The convergence
bounds can be provided by the transition probability matrix of Coupling Markov. Moreover, a new volatility called TV Volatility can be
calculated through the convergence bounds, wavelet analysis and Gaussian HMM. It's tested to estimate the volatility of two securities 
(TSLA and AMC). The results show TV Volatility can reflect the magnitude of the change of square returns in a period wonderfully.

\medskip

\noindent
{\em Keywords:} Nonlinear Markov Chains; Convergence Bounds; Coupling Markov; Volatility of Securities

\medskip

\noindent
MSC 2010: 60J10, 60J20, 60J99

\end{abstract}

\section{Introduction}\label{sec1}
In recent years, Markov chains have become one of the most prominent stochastic processes with the wide range of applications, such as 
image generation \cite{Ho,Song}, Natural Language Processing \cite{Liu} and Reinforcement Learning \cite{Foerster,Yu}. 

However, the assumptions in the basic Markov chains aren't always suitable for real-world situations. Thus, nonlinear Markov chains are 
defined for relaxing assumptions. Considering a discrete-time irreducible p-dimension Markov chain $(X_n)_{n \in \mathbb{Z}_+}$ with the 
distribution law $\mu_n := Law(X_n)=(\mu_n^1,\mu_n^2,\cdots,\mu_n^p)$ and transition probability matrix $\mathbb{P}_{p \times p}$, it 
always exists $\mu_n^T = \mu_{n-1}^T \mathbb{P}$. Actually, it can be also written as simultaneous equations:
\begin{equation}\label{def1}
    \begin{cases}
        P(1,1)\mu_{n-1}^1+P(2,1)\mu_{n-1}^2+\cdots+P(p,1)\mu_{n-1}^p = \mu_n^1 \\
        P(1,2)\mu_{n-1}^1+P(2,2)\mu_{n-1}^2+\cdots+P(p,2)\mu_{n-1}^p = \mu_n^2 \\
        \hspace{140pt} \vdots \\
        P(1,p)\mu_{n-1}^1+P(2,p)\mu_{n-1}^2+\cdots+P(p,p)\mu_{n-1}^p = \mu_n^p
     \end{cases}
\end{equation}
In the equations (\ref{def1}), $\mu_n^i$ can be {\em linear} expressed by $\mu_{n-1}$. When $\mu_n^i$ can't be linear expressed and 
it's influenced by high order terms of $\mu_n^1,\mu_n^2,\cdots,\mu_n^p$, Markov chain will be {\em nonlinear} Markov chain (nMC)
\footnote{The more rigorous definition including Markov processes on general state spaces can be found in \cite{Del,Kolokoltsov}} .
It also exists (\ref{def2}),
\begin{equation}\label{def2}
    \begin{cases}
        \sum_{i=1}^pP(i,1)\mu_{n-1}^i+\sum_{i=1}^pP_2(i,1)(\mu_{n-1}^i)^2+\cdots=\mu_n^1 \\
        \sum_{i=1}^pP(i,2)\mu_{n-1}^i+\sum_{i=1}^pP_2(i,2)(\mu_{n-1}^i)^2+\cdots=\mu_n^2 \\
        \hspace{140pt} \vdots \\
        \sum_{i=1}^pP(i,p)\mu_{n-1}^i+\sum_{i=1}^pP_2(i,p)(\mu_{n-1}^i)^2+\cdots=\mu_n^p
     \end{cases}
\end{equation}
For simple expression, let$P(x,y)=P_1(x,y)$, and $P_{\mu_{n-1}}(x,y)=\sum_{j=0}P_{j+1}(x,y)(\mu_{n-1}^x)^j$, then 
$\mu_n^T = \mu_{n-1}^T \mathbb{P}_{\mu_{n-1}}$. The idea of small nonlinear perturbations of linear Markov chains makes the randomness 
of the system greatly increased, which means nMC can fit the factual situation more appropriately but also difficult to calculating the 
result. 

Ergodicity, and the limiting distribution of nMC are one of the most important properties. It has been studied in recent research 
\cite{Butkovsky1,Butkovsky2,Butkovsky3,Shchegolev1,Shchegolev2,Shchegolev3}. 
Most of research tries to find a suitable method which transforms nMC to linear MC. But nMC's distribution doesn't always converge to 
a special value, which may be different from usual MC
\footnote{Actually, MC sometimes have the complex convergence condition, which may be also suitable for nMC. Veretennikov provides more 
detailed literature review in \cite{Veretennikov1,Veretennikov2}} . 
So, it always provides convergence bounds as a weakened condition. 

In Butkovsky's research \cite{Butkovsky2} it has been demonstrated that the Markov-Dobrushin (M-D) condition taken from linear MC is 
also applicable for nMC. Let a nMC $(X_n)_{n \in \mathbb{Z}_+}$ have the measurable state space $\mathscr{S}(E,\mathscr{E})$, and 
$\mathscr{P}(E)$ is a probability measure set defined on $\mathscr{S}$. The transition probability can be expressed as 
$P_\mu(x,B)=P(X_n \in B|X_{n-1}=x)$, where $x \in E, B \in \mathscr{E}, n \in \mathbb{Z}_+$. 
M-D condition for convergence is shown as (\ref{M-D}),
\begin{equation}\label{M-D}
    \sup_{\mu,\nu\in\mathscr{P}(E)} \|P_\mu(x,\cdot)-P_\nu(x^{\prime},\cdot) \|_{TV} \leq 2(1-\alpha),\quad \alpha>0,
    \quad x,x^{\prime} \in E
\end{equation}
Here $\|\mu-\nu\|_{TV}=\sup_{A \in \mathscr{E}} |\mu(A)-\nu(A)| = \int_E |\mu(dx)-\nu(dx)|$, which has another name for total variation 
distance between $\mu$ and $\nu$ in \cite{Rudin}. It's worth noting that (\ref{M-D}) doesn't always guarantee the convergence of nMC. 
And $\alpha$ is a Markov-Dobrushin's coefficient for nMC, which can be calculated as (\ref{alpha}),
\begin{equation}\label{alpha}
    \alpha := \inf_{x,x^{\prime},\mu,\nu} \int_E (\frac{P_\mu(x,dy)}{P_\nu(x^{\prime},dy)} \wedge 1) P_\nu(x^{\prime},dy)
\end{equation}
Specially, when the the state space is finite or countable, $\alpha$ is as follows,
\begin{equation*}
    \alpha := \inf_{x,x^{\prime},\mu,\nu} \sum_{y} P_\mu(x,y) \wedge P_\nu(x^{\prime},y)
\end{equation*}
In \cite{Butkovsky2}, there is another important condition (\ref{Lipschitz}) called as Lipschitz condition.
\begin{equation}\label{Lipschitz}
    \|P_\mu(x,\cdot)-P_\nu(x,\cdot) \|_{TV} \leq \lambda \|\mu - \nu \|_{TV},\quad \lambda \in [0,\alpha],\quad x \in E,
    \quad \mu,\nu \in \mathscr{P}(E)
\end{equation}
The result of condition (\ref{M-D}) and (\ref{Lipschitz}) reads further,
\begin{equation}\label{M-D-L}
    \sup_{\mu_n,\nu_n \in \mathscr{P}(E)} \|\mu_n - \nu_n\|_{TV} \leq 2(1-\alpha+\lambda)^n,\quad n \geq 1
\end{equation}
Additionally, when $\lambda < \alpha$ in (\ref{M-D-L}), $2(1-\alpha+\lambda)^n$ will be converged to 0 at exponential speed, which means
nMC exists uniform exponential ergodicity. But in case $\lambda = \alpha$, the condition (\ref{M-D-L}) should be defined as a linear 
convergence at $1/n$ speed in \cite{Butkovsky3,Shchegolev1}.
\begin{equation*}
    \sup_{\mu_n,\nu_n \in \mathscr{P}(E)} \|\mu_n - \nu_n\|_{TV} \leq \frac{2}{\lambda n},\quad n \geq 1
\end{equation*}
When $\lambda > \alpha$, convergence bounds $2(1-\alpha+\lambda)^n$ increase exponentially as n increase uniformly. But the TV distance 
between $\mu_n$ and $\nu_n$ has a maximum 2. It's a complicate situation, and \cite{Butkovsky1,Butkovsky3,Shchegolev2} gave further
research. Although Butkovsky has estimated the convergence bounds, it still can be enhanced for more precise result. 

Moreover, Shchegolev has demonstrated when $P_\mu(x,\cdot)$ and $P_\nu(x,\cdot)$ are substituted by k-step transition probability, 
the condition (\ref{M-D})-(\ref{M-D-L}) will still exist in \cite{Shchegolev1,Shchegolev2}. The result from \cite{Shchegolev2} 
is based on \cite{Shchegolev1}, but it's more explict. It expands the Lipschitz condition (\ref{Lipschitz}) to k-step transition.
\begin{equation*}
    \|P_\mu^{(k)}(x,\cdot)-P_\nu^{(k)}(x,\cdot) \|_{TV} \leq \lambda_k \|\mu - \nu \|_{TV},\quad \lambda \in [0,\alpha],
    \quad x \in E,\quad \mu,\nu \in \mathscr{P}(E)
\end{equation*}
Let the limiting distribution $\pi = \nu$, and Shchegolev has proved the convergence bounds (\ref{k-step}) is true for any probability 
measure $\mu\in\mathscr{P}(E)$.
\begin{equation}\label{k-step}
    \|\mu_n-\pi\|_{TV} \leq \|\mu_0 - \pi\|_{TV}(1-\alpha_k+\lambda_k)^{[n/k]}(1+\lambda_1)^{n\;{\rm mod}\;k}
\end{equation}
Specially, in the case $\alpha_k=\lambda_k$, a better expression is as follows.
\begin{equation*}
    \|\mu_n-\pi\|_{TV} \leq \frac{\|\mu_0 - \pi\|_{TV}}{2+\lambda_kn\|\mu_0 - \pi\|_{TV}}(1+\lambda_1)^{n\;{\rm mod}\;k}
\end{equation*}

There are many others exploring the convergence bounds or ergodicity of nMC, and some of them even apply nMC to complex fact situation. 
In \cite{Saburov}, Saburov proposes a polynomial stochastic operator $\mathfrak{P}:\mathscr{S}^{p-1}\rightarrow\mathscr{S}^{p-1}$, which 
can describe the nonlinear relationship of nMC distribution $\mu_n \in \mathscr{S}^{p-1}$ in equations (\ref{def2}). It also expands M-D 
condition (\ref{M-D}) and provides second-order, third-order coefficient of (\ref{alpha}) for strong ergodicity. In \cite{Zhang}, Zhang 
pays attention to complex epidemic system, and uses stochastic nMC to explore ergodic stationary distribution of the SIRI system. It 
judges convergence by probability, instead of the TV distance. Coincidentally, in \cite{Aghajani}, it solves some problems from 
stochastic system so-called GI/GI/N queueing network by nMC. 

However, there are still some problems in the research. First of all, the method to estimate convergence bounds isn't precise enough. 
Although some of them can even realize stronger convergence, there are many extra assumptions that most nMC can't respect to. Second, 
many studies are just based solely on mathematics. After estimating convergence bounds, it needs to test nMC by solving some practical 
problems, which even means nMC should bet MC or other classic models. 

These problems will be further analyzed in this paper. In section \ref{sec2}, some assumptions and lemmas will be introduced for the 
proof of main result, especially Coupling Markov theory. Actually, the assumptions and lemmas use the similar ideas with appropriate 
changes taken from recent research \cite{Shchegolev3,Veretennikov2}. In section \ref{sec3}, it will enhance convergence condition further 
by the spectral radius, optimization and etcetera. The work may be associated with \cite{Leadbetter}, but it was changed moderately for 
nMC. Moreover, it will use some numerical examples to test and explain the resluts, and will compare the new estimation with classic M-D 
condition (\ref{M-D}) and fact result. In section \ref{sec4}, the main result will be applied in quantitative finance. Considering that 
TV distance describes the difference between current distribution $\mu_n$ and limiting distribution $\pi$, it can measure the volatility 
of system. If the TV distance becomes bigger, the difference will be bigger and the current system will be harder to reach a steady 
limiting system. According to the simple idea, a new indicator named TV Volatility can be defined to measure the risk of the returns in a
period. Importantly, it overcomes short-sightedness of the volatility through GARCH model, and even can acutely catch the changes of 
series. At least, in section \ref{sec5}, the conclusion will summerize all the results in the paper, and provides some ideas for further 
research.

\section{Assumption and Lemma}\label{sec2}
nMC in equations (\ref{def2}) can be regarded as the linear MC with small nonlinear perturbations. So, two assumptions are proposed for 
transforming nMC to linear MC. 

\begin{Assumption}
    It exists a homogeneous linear MC with nMC. The nMC has transition probability $P_{\mu_i}$, and linear MC has transition probability 
    $P^*$ which is irrelevant to distribution. For any step $n$, each state $x,y$, it always exists, 
    \begin{equation}\label{assumption1}
        \frac{P^*(x,y)}{P_{\mu_n}(x,y)} \leq (1+\gamma),\quad \forall n \in \mathbb{Z}_+,\quad x,y \in E
    \end{equation}
    where $\gamma$ is a small quantity. 
\end{Assumption}

The assumption (\ref{assumption1}) defines a homogeneous linear MC,which is based on original nMC but denoise stochastic small 
perturbations. By the way, there are maybe more than one linear MC satisfing assumption. 

\begin{Assumption}
    The nMC and linear MC mapped by nMC have the same observed trajectory. When it obtained a factual trajectory, the difference 
    between nMC and MC only is the distribution of stochastic states and probability to get the trajectory. 
\end{Assumption}

\begin{figure}[htb]
    \centering
    \begin{minipage}[t]{0.48\textwidth}
    \centering
    \includegraphics[width=\linewidth]{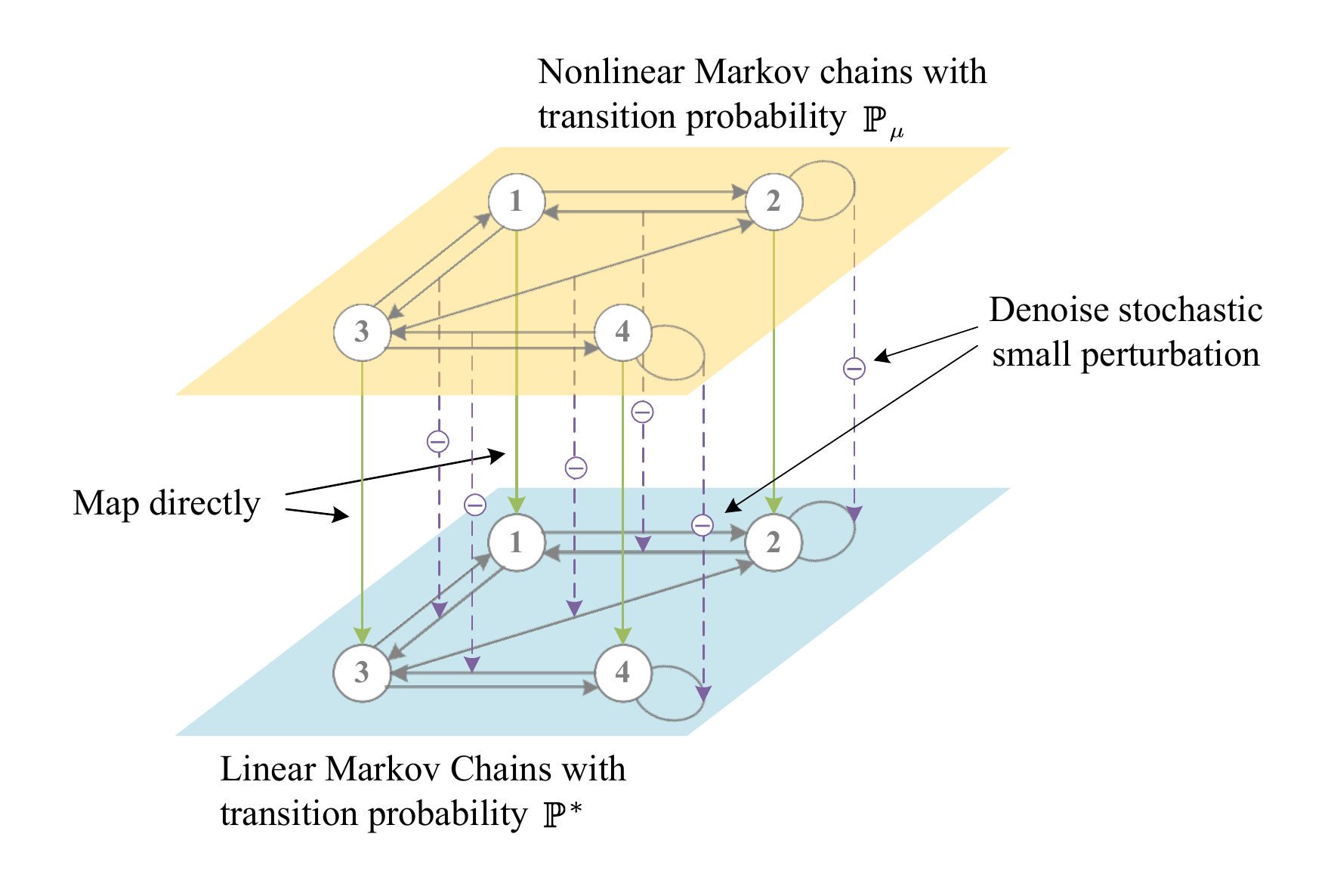}
    \caption{Map nMC to MC by Assumption 1}
    \end{minipage}
    \hspace{-1cm}
    \begin{minipage}[t]{0.48\textwidth}
    \centering
    \includegraphics[width=\linewidth]{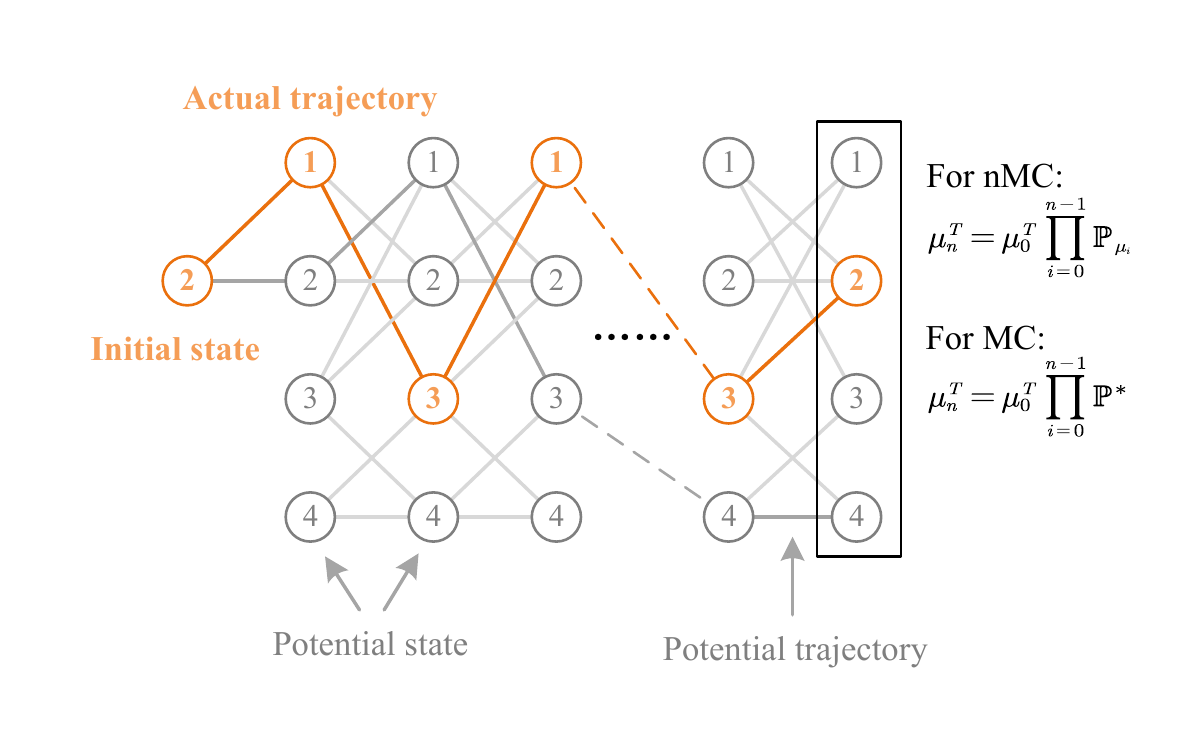}
    \caption{nMC and MC have the same trajectory}
    \end{minipage}
\end{figure}

Figure 1 and figure 2 describe these 2 assumptions in detail. Especially, the example is a nMC with 4 states. Figure 1 explains the 
idea about mapping nMC to MC by removing small perturbations. Figure 2 shows nMC and MC influence the distribution but have the same 
observed trajectory. 

The lemmas is mainly about Coupling Markov in \cite{Veretennikov1,Veretennikov2}. First, it's necessary to introduce the basic 
construction for next lemmas. Considering  two random variables $X^1$ and $X^2$ with their different measurable spaces 
$\mathscr{S}^1(E^1,\mathscr{E}^1)$ and $\mathscr{S}^2(E^2,\mathscr{E}^2)$. If the densities $p^1$ and $p^2$ on $X^1$ and $X^2$ with the 
same reference measure $\varLambda$ satisfy the condition (\ref{q}), 
\begin{equation}\label{q}
    q:= \int (p^1(x)\wedge p^2(x))\varLambda(dx) > 0
\end{equation} 
it will exist a group of random variables $\eta^1,\eta^2,\xi,\zeta$. Among them, $\zeta$ is a Bernoulli random variable, 
$\zeta \sim B(1,1-q)$, and $\eta^1,\eta^2,\xi$ have the densities (\ref{coupledef}).
\begin{equation}\label{coupledef}
    \begin{gathered}
        p^{\eta^1}(x) :=\frac{p^1-p^1\wedge p^2}{\int (p^1-p^1\wedge p^2)(y)\varLambda(dy)}(x),\quad
        p^{\eta^2}(x) :=\frac{p^2-p^1\wedge p^2}{\int (p^2-p^1\wedge p^2)(y)\varLambda(dy)}(x) \\
        p^{\xi}(x) :=\frac{p^1\wedge p^2}{\int (p^1\wedge p^2)(y)\varLambda(dy)}(x)
    \end{gathered}
\end{equation}
Especially, when $q=1$ in (\ref{q}), it sets,
\begin{equation*}
    \eta^1 = \eta^2 = \xi = X^1 = X^2
\end{equation*}
Veretennikov provides a lemma about the four random variables in \cite{Veretennikov2}. 

\begin{Lemma}\label{lemma1}
    Define a new measurable space $\mathscr{S}^* := \mathscr{S}^1 \times \mathscr{S}^2$ and two random variables 
    $(\tilde{X}^1, \tilde{X}^2)$ on $\mathscr{S}^*$, which satisfy
    \begin{equation}\label{X1X2}
        \tilde{X}^1 := \eta^1 \mathbb{I}(\zeta \neq 0) + \xi \mathbb{I}(\zeta = 0), \quad
        \tilde{X}^2 := \eta^2 \mathbb{I}(\zeta \neq 0) + \xi \mathbb{I}(\zeta = 0)
    \end{equation}
    Then, it always exists that
    \begin{equation}
        Law(\tilde{X}^j) = Law(X^j),\quad j=1,2 \quad \& \quad P(\tilde{X}^1=\tilde{X}^2)=q
    \end{equation}
\end{Lemma}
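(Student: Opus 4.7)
\medskip

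\noindent\textbf{Proof proposal.} The plan is to verify the two conclusions separately, exploiting the mixture-plus-disjoint-supports structure that is built into the definitions (\ref{coupledef}). Throughout I will assume that $\zeta$ is taken independent of the triple $(\eta^1,\eta^2,\xi)$, since this is the standard ingredient of the maximal coupling construction from \cite{Veretennikov2}; the degenerate case $q=1$ is trivial from the convention after (\ref{coupledef}), so I will work under $q\in(0,1)$.

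For the marginal identities $\mathrm{Law}(\tilde X^j)=\mathrm{Law}(X^j)$, the first step is to unfold the definition (\ref{X1X2}) and condition on $\zeta$. By independence this gives, for any $A\in\mathscr{E}^1$,
\begin{equation*}
P(\tilde X^1\in A)=P(\zeta\neq 0)\,P(\eta^1\in A)+P(\zeta=0)\,P(\xi\in A)=(1-q)\int_A p^{\eta^1}(x)\,\varLambda(dx)+q\int_A p^{\xi}(x)\,\varLambda(dx).
\end{equation*}
The next step is the key algebraic cancellation: because $\int (p^1-p^1\wedge p^2)(y)\varLambda(dy)=1-q$ and $\int (p^1\wedge p^2)(y)\varLambda(dy)=q$, the normalising constants in (\ref{coupledef}) cancel the weights $(1-q)$ and $q$, yielding the pointwise mixture decomposition
\begin{equation*}
(1-q)p^{\eta^1}(x)+q\,p^{\xi}(x)=\bigl(p^1(x)-p^1(x)\wedge p^2(x)\bigr)+p^1(x)\wedge p^2(x)=p^1(x).
\end{equation*}
Hence $P(\tilde X^1\in A)=\int_A p^1(x)\varLambda(dx)=P(X^1\in A)$, and the symmetric computation handles $\tilde X^2$.

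For the coupling identity $P(\tilde X^1=\tilde X^2)=q$, I would argue by partitioning on $\{\zeta=0\}$ and $\{\zeta\neq 0\}$. On the first event both $\tilde X^1$ and $\tilde X^2$ coincide with $\xi$ by (\ref{X1X2}), so $\tilde X^1=\tilde X^2$ there automatically. On the complement the equality reduces to $\eta^1=\eta^2$. The crucial observation is a disjoint-support argument: whenever $p^1(x)>p^2(x)$ one has $p^1(x)-p^1(x)\wedge p^2(x)>0$ while $p^2(x)-p^1(x)\wedge p^2(x)=0$, and vice versa, so the densities $p^{\eta^1}$ and $p^{\eta^2}$ are supported on disjoint sets. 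Consequently $P(\eta^1=\eta^2)=0$, giving $P(\tilde X^1=\tilde X^2\mid \zeta\neq 0)=0$, and combining the two pieces produces $P(\tilde X^1=\tilde X^2)=P(\zeta=0)=q$.

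The only delicate point I anticipate is bookkeeping on the joint distribution: the statement as written does not make the independence of $\zeta$ from $(\eta^1,\eta^2,\xi)$ explicit, nor the choice of coupling between $\eta^1$ and $\eta^2$. The disjoint-support argument is robust enough that any joint law works for the equality-probability step, but the marginal computation does genuinely need $\zeta$ to be independent of the other variables, so I would flag this as part of the construction rather than hide it inside the calculation. Everything else is a routine manipulation of the mixture identity $p^1=(p^1-p^1\wedge p^2)+p^1\wedge p^2$.
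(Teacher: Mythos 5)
Your proposal is correct, and it is worth comparing with the paper's Appendix \ref{appendixA} because the two do not fully coincide. For the marginal identity $\Law(\tilde{X}^j)=\Law(X^j)$ you use the same core idea as the paper: condition on $\zeta$, invoke independence of $\zeta$ from $(\eta^1,\eta^2,\xi)$, and let the normalising constants $1-q$ and $q$ cancel against the mixture weights so that $(1-q)p^{\eta^1}+q\,p^{\xi}=p^1$. Your version is cleaner in two respects: you work directly with events $A\in\mathscr{E}^1$, which identifies the laws immediately, whereas the paper routes through an arbitrary bounded test function $\mathscr{F}$ and then appends an unnecessary (and shaky) contradiction argument via the mean value theorem to upgrade equality of expectations to equality of densities; and you explicitly flag the independence assumption on $\zeta$, which the paper uses silently. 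More importantly, your second half fills a genuine gap: the paper's proof never addresses the claim $P(\tilde{X}^1=\tilde{X}^2)=q$ at all. Your disjoint-support argument --- $p^{\eta^1}$ and $p^{\eta^2}$ vanish on complementary sets because $p^1-p^1\wedge p^2$ and $p^2-p^1\wedge p^2$ cannot be simultaneously positive, hence $P(\eta^1=\eta^2)=0$ under any joint law, so $P(\tilde{X}^1=\tilde{X}^2)=P(\zeta=0)=q$ --- is exactly the standard way to close this, and your observation that this step is insensitive to the coupling of $(\eta^1,\eta^2)$ while the marginal step genuinely needs independence of $\zeta$ is a correct and useful clarification of what the construction requires.
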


The succinct proof of lemma \ref{lemma1} can be seen in appendix \ref{appendixA}. It's worth noting that $\mathscr{S}^*$ creates a 
public measurable space for connecting two random variables. In \cite{Veretennikov2}, $(\tilde{X}^1, \tilde{X}^2)$ is called as coupling 
random variable.

For two homogeneous Markov chains $X_n^1,X_n^2$, lemma \ref{lemma1} is still correct. Define (\ref{qn}) for $X_n^1,X_n^2$ with 
distribution $\mu_n^1,\mu_n^2$
\begin{equation}\label{qn}
    q_n := \int \left(\frac{\mu_n^1(dx)}{\mu_n^2(dx)} \wedge 1\right)\mu_n^2(dx)
\end{equation}
According to the definition (\ref{coupledef}), it also exists a group of random variables $(\eta_n^1,\eta_n^2,\xi_n,\zeta_n)$. Among them,
the random sequences $\zeta_n$ are Bernoulli sequences, and the sequences $\eta_n^1,\eta_n^2,\xi_n$ satisfy:
\begin{equation}\label{mcoupledef}
    \begin{gathered}
        p^{\eta_n^1}(x):=\frac{\mu_n^1-\mu_n^1\wedge \mu_n^2}{\int (\mu_n^1-\mu_n^1\wedge \mu_n^2)(y)\varLambda(dy)}(x),\quad
        p^{\eta_n^2}(x):=\frac{\mu_n^2-\mu_n^1\wedge \mu_n^2}{\int (\mu_n^2-\mu_n^1\wedge \mu_n^2)(y)\varLambda(dy)}(x) \\
        p^{\xi_n}(x):=\frac{\mu_n^1\wedge \mu_n^2}{\int (\mu_n^1\wedge \mu_n^2)(y)\varLambda(dy)}(x)
    \end{gathered}
\end{equation}
Especially, for guaranteeing (\ref{mcoupledef}) is meaningful, when $q_n = 0$, it should set:
\begin{equation*}
    \eta_n^1 := X_n^1,\quad \eta_n^2 := X_n^2,\quad \xi_n := X_n^1,\quad \zeta_n := 1
\end{equation*}
When $q_n = 1$, it should set
\footnote{In the situation, $X_n^1 = X_n^2$. Thus, $X_n^1$ can be replaced by $X_n^2$.} :
\begin{equation*}
    \eta_n^1 := X_n^1,\quad \eta_n^2 := X_n^1,\quad \xi_n := X_n^1,\quad \zeta_n := 0
\end{equation*}
The definition (\ref{X1X2}) is used, too. Moreover, the Bernoulli random sequence $\zeta_n$ is a sign that $(\tilde{X}_n^1,\tilde{X}_n^2)$
reaches steady state $\tilde{X}_n^1 = \tilde{X}_n^2$.
\begin{equation}
    \tilde{X}_n^1 := \eta_n^1 \mathbb{I}(\zeta_n \neq 0) + \xi_n \mathbb{I}(\zeta_n = 0), \quad
    \tilde{X}_n^2 := \eta_n^2 \mathbb{I}(\zeta_n \neq 0) + \xi_n \mathbb{I}(\zeta_n = 0)
\end{equation} 

For exploring the mechanism about the changes of nMC from $n$ step to $n+1$ step, it should define the transition probability, first. 
Considering two groups of random variables $\mathcal{X}(x^1,x^2,x^3,x^4),\mathcal{Y}(y^1,y^2,y^3,y^4)$, it can define a coefficient 
$\kappa(x^1,x^2)=\int\left(\frac{P(x^1,dy)}{P(x^2,dy)}\wedge 1\right)P(x^1,dy)$. Because $\eta_n^1,\eta_n^2,\xi_n,\zeta_n$ are independent,
transition probability $P(\mathcal{X},\mathcal{Y})$ can be expressed by marginal probability.
\begin{equation}
    P(\mathcal{X},\mathcal{Y}) := P^{\eta^1}(x^1,y^1)P^{\eta^2}(x^2,y^2)P^{\xi}(x^3,y^3)P^{\zeta}(x^4,y^4)
\end{equation}
According to the definition (\ref{mcoupledef}), if $0<\kappa(x^1,x^2)<1$, the marginal probability here can be written as:
\begin{align*}
    P^{\eta^1}(x^1,y^1) &:= \frac{P(x^1,y^1)-P(x^1,y^1)\wedge P(x^2,y^1)}{1-\kappa(x^1,x^2)} \\
    P^{\eta^2}(x^2,y^2) &:= \frac{P(x^2,y^2)-P(x^1,y^2)\wedge P(x^2,y^2)}{1-\kappa(x^1,x^2)} \\
    P^{\xi}(x^3,y^3) &:= \mathbb{I}(x^4=1)\frac{P(x^1,y^3)\wedge P(x^2,y^3)}{\kappa(x^1,x^2)}+\mathbb{I}(x^4=0)P(x^3,y^3) \\
    P^{\zeta}(x^4,y^4) &:= \mathbb{I}(x^4=1)\left[\mathbb{I}(y^4=1)(1-\kappa(x^1,x^2))+\mathbb{I}(y^4=0)\kappa(x^1,x^2)\right]
    +\mathbb{I}(x^4=0)\mathbb{I}(y^4=0)
\end{align*}
If $\kappa(x^1,x^2)=0$, $P^{\xi}(x^3,y^3)$ should be reset as:
\begin{equation*}
    P^{\xi}(x^3,y^3) := \mathbb{I}(x^4=1)P(x^3,y^3) + \mathbb{I}(x^4=0)P(x^3,y^3) = P(x^3,y^3)
\end{equation*}
and if $\kappa(x^1,x^2)=1$, it should be defined as:
\begin{equation*}
    P^{\eta^1}(x^1,y^1) = P^{\eta^2}(x^2,y^2) := P(x^1,y^1)
\end{equation*} 

After that, the lemma \ref{lemma2} is proposed for describing the properties and mechanism of Coupling Markov 
$(\tilde{X}_n^1,\tilde{X}_n^2)$.

\begin{Lemma}\label{lemma2}
    If it exists two homogeneous Markov chains $(X_n^1),(X_n^2)$ with the same state space $\mathscr{S}$ and transition probability matrix
    $\mathbb{P}$, $(\tilde{X}_n^1,\tilde{X}_n^2)$ is a couple of Markov chains which is the same as $(X_n^1),(X_n^2)$ in transition 
    probability. Moreover, for any $n \in \mathbb{Z}_+$, it always has:
    \begin{equation}
        Law(\tilde{X}_n^j) = Law(X_n^j),\quad j=1,2 \quad \& \quad P(\tilde{X}_n^1=\tilde{X}_n^2)=q_n
    \end{equation}
\end{Lemma}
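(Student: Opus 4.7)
The plan is to prove the lemma by induction on $n$, using Lemma 1 to anchor the base case and to drive the inductive step, since each transition of the coupled chain is essentially a fresh application of the coupling construction. For $n=0$, applying Lemma 1 directly to the pair $(X_0^1, X_0^2)$ with densities $\mu_0^1, \mu_0^2$ yields the required marginal identity $\mathrm{Law}(\tilde{X}_0^j) = \mathrm{Law}(X_0^j)$ and the coincidence probability $P(\tilde{X}_0^1 = \tilde{X}_0^2) = q_0$. The base case thus reduces entirely to what has already been proved.

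For the inductive step I would assume the assertion holds at step $n$ and show it at step $n+1$. The key observation is that the transition kernel $P(\mathcal{X}, \mathcal{Y}) = P^{\eta^1} P^{\eta^2} P^\xi P^\zeta$ defined just before the statement is built so that, when one marginalizes out the auxiliary components, each of the $\eta^1$- and $\eta^2$-marginals of the step from $(\tilde{X}_n^1, \tilde{X}_n^2)$ to $(\tilde{X}_{n+1}^1, \tilde{X}_{n+1}^2)$ collapses back to $P(x, \cdot)$. Concretely, for each fixed $x^1$ one would check
\[
  (1-\kappa(x^1,x^2))\,P^{\eta^1}(x^1,y^1) + \kappa(x^1,x^2)\,P^\xi(x^3,y^1)\big|_{x^3=x^1} \;=\; P(x^1,y^1),
\]
and symmetrically for the second coordinate, which is exactly the decomposition used to derive Lemma 1. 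Combined with the inductive hypothesis on the marginals at time $n$, this gives $\mathrm{Law}(\tilde{X}_{n+1}^j) = \mathrm{Law}(X_{n+1}^j)$. The Markov property of $(\tilde{X}_n^1, \tilde{X}_n^2)$ follows because the transition kernel depends only on the current state through $\kappa(x^1,x^2)$ and the density functions at time $n$, and because the auxiliary variables $(\eta_n^1, \eta_n^2, \xi_n, \zeta_n)$ are drawn conditionally independently at each step.

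For the coincidence identity $P(\tilde{X}_{n+1}^1 = \tilde{X}_{n+1}^2) = q_{n+1}$, I would exploit the definitions in (\ref{X1X2}): the event $\{\tilde{X}_{n+1}^1 = \tilde{X}_{n+1}^2\}$ contains $\{\zeta_{n+1}=0\}$, on which both components coincide with $\xi_{n+1}$. By the construction of $\zeta_{n+1}$ as a $B(1, 1-q_{n+1})$ variable, $P(\zeta_{n+1}=0) = q_{n+1}$. One then has to argue that on $\{\zeta_{n+1} \neq 0\}$ the variables $\eta_{n+1}^1$ and $\eta_{n+1}^2$ are almost surely unequal, which follows from the fact that their densities in (\ref{mcoupledef}) have disjoint supports (the numerators $\mu_{n+1}^j - \mu_{n+1}^1 \wedge \mu_{n+1}^2$ vanish on the overlap region). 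Combining these gives exactly $q_{n+1}$.

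The main obstacle, in my estimation, is bookkeeping rather than depth: carefully treating the degenerate cases $q_n \in \{0,1\}$ and $\kappa(x^1,x^2) \in \{0,1\}$ where the definitions in (\ref{mcoupledef}) and the transition kernel are patched by hand, and verifying that in each such boundary regime the marginal-preservation identity and the coincidence identity both still hold. A second, subtler point is justifying that the joint process $(\tilde{X}_n^1, \tilde{X}_n^2)$ really is Markov on $\mathscr{S}^* = \mathscr{S}^1 \times \mathscr{S}^2$ (and not just marginally Markov), which requires showing that the auxiliary Bernoulli $\zeta_n$ can be absorbed into the coupling kernel without introducing hidden dependence on the history — this is where the explicit product form $P^{\eta^1} P^{\eta^2} P^\xi P^\zeta$ is essential.
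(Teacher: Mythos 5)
Your proposal is correct and follows essentially the same route as the paper's Appendix B: the heart of both arguments is the telescoping identity $(1-\kappa(x^1,x^2))P^{\eta^1}(x^1,y)+\kappa(x^1,x^2)P^{\xi}(x^1,y)=P(x^1,y)$ showing that each coordinate marginal of the coupled kernel collapses to $\mathbb{P}$, combined with Lemma \ref{lemma1} for the marginal laws and the coincidence probability. The only difference is presentational — you make the induction scaffolding, the coincidence identity $P(\tilde{X}_n^1=\tilde{X}_n^2)=q_n$ (via $P(\zeta_n=0)=q_n$ and the disjoint supports of $p^{\eta_n^1},p^{\eta_n^2}$), and the degenerate cases $q_n,\kappa\in\{0,1\}$ explicit, whereas the paper leaves these to Lemma \ref{lemma1} and omits the boundary cases.
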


The proof of lemma \ref{lemma2} will be seen in appendix \ref{appendixB}. It provides an important idea to re-express the irrelevant Markov
chains by a couple of relevant Markov chains. They have the same generation approach, transition probability and even distribution at any
time steps. 

Additionally, for $\mathscr{X}_n=(\tilde{X}_n^1,\tilde{X}_n^2)$, when it doesn't reach a coupling state $\tilde{X}_n^1\neq \tilde{X}_n^2$,
$\mathscr{X}_n$ is on $\mathscr{S}^{p(p-1)}$. $\mathscr{X}_n$ transforms to $\mathscr{X}_{n+1}$, which means that
$(\eta_n^1,\eta_n^2,*,1)$ transforms to $(\eta_{n+1}^1,\eta_{n+1}^2,*,1)$ where $*$ is expressed for any values. The transition probability
and matrix can be defined as:
\begin{equation}\label{M}
    \begin{gathered}
        P(\mathscr{X}_n,\mathscr{X}_{n+1})=P^{\eta^1}(\eta_n^1,\eta_{n+1}^1)P^{\eta^2}(\eta_n^2,\eta_{n+1}^2)P^{\zeta}(1,1) \\
        \\
        \mathbb{M}= 
        \begin{pmatrix}
            P(\mathscr{X}_1,\mathscr{X}_1) & P(\mathscr{X}_1,\mathscr{X}_2) & \cdots & P(\mathscr{X}_1,\mathscr{X}_{p(p-1)}) \\
            P(\mathscr{X}_2,\mathscr{X}_1) & P(\mathscr{X}_2,\mathscr{X}_2) & \cdots & P(\mathscr{X}_2,\mathscr{X}_{p(p-1)}) \\
            \vdots & \vdots & \ddots & \vdots \\
            P(\mathscr{X}_{p(p-1)},\mathscr{X}_1) & P(\mathscr{X}_{p(p-1)},\mathscr{X}_2) & \cdots & P(\mathscr{X}_{p(p-1)},\mathscr{X}_{p(p-1)})
        \end{pmatrix}
    \end{gathered}
\end{equation}

\section{Main Result and Examples}\label{sec3}
Considering nMC $(X_n)_{n \in \mathbb{Z}_+}$ and the mapping linear MC $(X_n^*)_{n \in \mathbb{Z}_+}$, the TV distance between $\mu_n$ and
$\pi$ can be limited by using the triangle inequality. 
\begin{equation}\label{triangle}
    \|\mu_n-\pi\|_{TV} \leq \|\mu_n-\mu_n^*\|_{TV} + \|\pi-\pi^*\|_{TV} + \|\mu_n^*-\pi^*\|_{TV}
\end{equation}
Then, every parts in the inequality will be compressed in order to obtain the enhanced convergence bounds. 

\begin{Theorem}\label{T1}
    For two linear MC $({X_n^1}^*)_{n \in \mathbb{Z}_+}$ with distribution $\mu_n^*$ and $({X_n^2}^*)_{n \in \mathbb{Z}_+}$ with steady
    distribution $\pi^*$, there is a coupling Markov $(\tilde{X}_n^1,\tilde{X}_n^2)$ with transition matrix $\mathbb{M}$. And it always 
    exists: 
    \begin{equation}
        \|\mu_n^*-\pi^*\|_{TV} \leq \left(r(\mathbb{M})+\varepsilon\right)^n \|\mu_0^*-\pi^*\|_{TV}
    \end{equation}
    Here $r(\mathbb{M})$ is the spectral radius, and $\varepsilon$ is a small quantity. 
\end{Theorem}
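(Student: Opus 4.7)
The plan is to recast Theorem~\ref{T1} as a statement about the survival probability of a coupling, then reduce that survival probability to iterates of the sub-stochastic matrix $\mathbb{M}$, so that the spectral radius produces the geometric rate.

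First, I would invoke Lemma~\ref{lemma2} with $X_n^1={X_n^1}^*$ distributed as $\mu_n^*$ and $X_n^2={X_n^2}^*$ started from $\pi^*$ (so $\mathrm{Law}(X_n^2)=\pi^*$ for every $n$, by stationarity). This supplies a coupling $(\tilde{X}_n^1,\tilde{X}_n^2)$ with $\mathrm{Law}(\tilde{X}_n^1)=\mu_n^*$, $\mathrm{Law}(\tilde{X}_n^2)=\pi^*$, and $P(\tilde{X}_n^1=\tilde{X}_n^2)=q_n$. The standard coupling inequality then gives $\|\mu_n^*-\pi^*\|_{TV}\leq c\,P(\tilde{X}_n^1\neq\tilde{X}_n^2)=c(1-q_n)$, where $c$ is the constant dictated by the paper's TV normalisation. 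The task is thereby reduced to bounding the decay of $1-q_n$.

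Second, I would interpret $1-q_n$ dynamically. Let $v_n$ denote the restriction to the uncoupled stratum $\mathscr{S}^{p(p-1)}$ of the joint law of $\mathscr{X}_n=(\tilde{X}_n^1,\tilde{X}_n^2)$, i.e.\ to the event $\{\zeta_n=1\}$. By the construction in~\eqref{M}, together with the fact that once $\zeta_k=0$ one has $P^\zeta(0,0)=1$ and $\tilde{X}_m^1=\tilde{X}_m^2$ forever after, the vector $v_n$ evolves linearly as $v_{n+1}^T=v_n^T\mathbb{M}$, so $1-q_n=\|v_n\|_1=\|v_0^T\mathbb{M}^n\|_1$, and $\|v_0\|_1=1-q_0$ is proportional to $\|\mu_0^*-\pi^*\|_{TV}$ under the same normalisation. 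Gelfand's formula $\lim_{n\to\infty}\|\mathbb{M}^n\|^{1/n}=r(\mathbb{M})$ then supplies, for every $\varepsilon>0$, an equivalent matrix norm with $\|\mathbb{M}\|\leq r(\mathbb{M})+\varepsilon$; finite dimensionality and norm equivalence yield $\|v_0^T\mathbb{M}^n\|_1\leq K_\varepsilon(r(\mathbb{M})+\varepsilon)^n\|v_0\|_1$, and combining this with the coupling inequality produces the claimed bound (after absorbing the prefactor $K_\varepsilon$ by slightly enlarging $\varepsilon$, or by passing to a norm adapted to the Jordan form of $\mathbb{M}$).

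I expect the main obstacle to be rigorously justifying that the uncoupled marginal genuinely evolves by $\mathbb{M}$, independently of the coupled component. This amounts to checking that the Markov property survives conditioning on $\{\zeta_n=1\}$ and that the degenerate branches $\kappa(x^1,x^2)=0$ and $\kappa(x^1,x^2)=1$ in~\eqref{mcoupledef} patch together coherently with the generic $0<\kappa<1$ case, so that $\mathbb{M}$ is a bona fide sub-stochastic operator on $\mathscr{S}^{p(p-1)}$. A secondary nuisance is the constant $K_\varepsilon$ in the spectral bound: the cleanest fix is to note that for any $\varepsilon'>0$ one can pick $\varepsilon$ small enough that $K_\varepsilon(r(\mathbb{M})+\varepsilon)^n\leq(r(\mathbb{M})+\varepsilon')^n$ for all $n\geq 1$ in the interesting regime $r(\mathbb{M})+\varepsilon'<1$; the opposite regime makes the statement trivial since $\|\mu_n^*-\pi^*\|_{TV}$ is automatically bounded.
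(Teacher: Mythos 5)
Your proposal follows essentially the same route as the paper's own proof: reduce $\|\mu_n^*-\pi^*\|_{TV}$ to the survival probability $1-q_n=P(\tilde{X}_n^1\neq\tilde{X}_n^2)$ of the coupling from Lemma~\ref{lemma2}, propagate the uncoupled mass by powers of the sub-stochastic matrix $\mathbb{M}$, and convert the resulting norm bound into a spectral-radius rate. If anything, your handling of the last step via Gelfand's formula (absorbing the prefactor $K_\varepsilon$ by enlarging $\varepsilon$) is more careful than the paper's chain $\|\mathbb{M}^n\|_1\leq\|\mathbb{M}\|_1^n\leq(r(\mathbb{M})+\varepsilon)^n$, which tacitly assumes $\|\mathbb{M}\|_1\leq r(\mathbb{M})+\varepsilon$.
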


\begin{proof}
    For any $n \in \mathbb{Z}_+$, by using (\ref{qn}), it has
    \begin{equation}\label{leq1}
    \begin{aligned}
        \frac{1}{2}\|\mu_n^*-\pi^*\|_{TV} &= \frac{1}{2}\sum_x |\mu_n^*(x)-\pi^*(x)| \\
        &= \frac{1}{2}\left(\sum_x\mu_n^*(x)+\sum_x\pi^*(x)-2\sum_x\mu_n^*\wedge\pi(x)\right) \\
        &= 1 - \sum_x \left(\frac{\pi(x)}{\mu_n^*(x)}\wedge 1\right)\mu_n^*(x)=1-q_n \\
        &=1-P_{\mu_0^*,\pi^*}(\zeta_n=0) = P_{\mu_0^*,\pi^*}(\tilde{X}_n^1 \neq \tilde{X}_n^2) \\
    \end{aligned}
    \end{equation}
    Recall the transition probability of $\zeta_n$. If $\exists n_0$, $\zeta_{n_0}=0$, then $\forall n > n_0$, $\zeta_n=0$. So, for 
    guaranteeing $\tilde{X}_n^1 \neq \tilde{X}_n^2$, $\forall n_0 \in \mathbb{Z}_+,n_0 \leq n$, it must have $\zeta_{n_0}=1$.
    Moreover, $\zeta_n$ depends on $\eta_n^1,\eta_n^2$. There are many kinds of trajectories satisfying the condition. Considering all 
    possibilities, it uses expection $\mathbb{E}$ about all the trajectories and $\mathscr{X}_i=(\eta_i^1,\eta_i^2,*,1)$ for simple 
    expression, and probability can be written as  
    \begin{equation}\label{leq2}
    \begin{aligned}
        P_{\mu_0^*,\pi^*}(\tilde{X}_n^1 \neq \tilde{X}_n^2) &= \mathbb{E}_{\mu_0^*,\pi^*}
        \left(\prod_{i=0}^{n-1}P(\mathscr{X}_i,\mathscr{X}_{i+1})\mathbb{I}(\eta_{i+1}^1\neq\eta_{i+1}^2)
        \mathbb{I}(\tilde{X}_0^1\neq\tilde{X}_0^2)\right) \\
        &=\mathbb{E}_{\mu_0^*,\pi^*}
        \left(\prod_{i=0}^{n-1}P(\mathscr{X}_i,\mathscr{X}_{i+1})\mathbb{I}(\eta_{i+1}^1\neq\eta_{i+1}^2)\right)
        P_{\mu_0^*,\pi^*}(\tilde{X}_0^1\neq\tilde{X}_0^2) \\
        &=\sum_{\mathscr{X}_0}\sum_{\mathscr{X}_{n-1}}\left(\mathbb{M}^n\right)_{\mathscr{X}_0,\mathscr{X}_{n-1}}
        p(\mathscr{X}_0)*\left(\frac{1}{2}\|\mu_0^*-\pi\|_{TV}\right) \\
        &\leq \sum_{\mathscr{X}_0}\|\mathbb{M}^n\|_1p(\mathscr{X}_0)*\left(\frac{1}{2}\|\mu_0^*-\pi\|_{TV}\right) \\
        &\leq \|\mathbb{M}\|_1^n \left(\frac{1}{2}\|\mu_0^*-\pi\|_{TV}\right) 
    \end{aligned}    
    \end{equation}
    where $\|\mathbb{M}\|_1$ is a 1-norm for matrix. Because the elements from $\mathbb{M}$ are positive, 
    \begin{equation*}
        \|\mathbb{M}\|_1 = \max_{\mathscr{X}_i} \sum_{\mathscr{X}_j} |\mathbb{M}_{\mathscr{X}_i,\mathscr{X}_j}| =
        \max_{\mathscr{X}_i} \sum_{\mathscr{X}_j} \mathbb{M}_{\mathscr{X}_i,\mathscr{X}_j}
    \end{equation*}
    Thus, according to (\ref{leq1}) and (\ref{leq2}), it gets
    \begin{equation*}
        \|\mu_n^*-\pi^*\|_{TV} \leq \|\mathbb{M}\|_1^n \|\mu_0^*-\pi^*\|_{TV} \leq \left(r(\mathbb{M})+\varepsilon\right)^n
        \|\mu_0^*-\pi^*\|_{TV}
    \end{equation*}
\end{proof}

\begin{Theorem}\label{T2}
    The mapping linear Markov chain $(X_n^*)_{n \in \mathbb{Z}_+}$ has a state space $\mathscr{S}(E,\mathscr{E})$ and the kinds of state
    $p\geq 2$. If the initial distribution $\mu_0^*$ can be set in advance and the limiting distribution $\pi^*$ is unknown, the inequality 
    (\ref{mu0}) will always hold.
    \begin{equation}\label{mu0}
        \|\mu_0^*-\pi^*\|_{TV} \leq 2(1-\frac{1}{p})
    \end{equation}
\end{Theorem}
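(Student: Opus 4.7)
The plan is to exploit the freedom to specify $\mu_0^*$ in advance and take it to be the uniform distribution $\mu_0^*(x) = 1/p$ on $E$. This is the natural minimax choice when $\pi^*$ is unknown, and it reduces the theorem to verifying the elementary inequality $\sum_{x \in E} |1/p - \pi^*(x)| \leq 2(1 - 1/p)$ for every probability measure $\pi^*$ on the $p$-point state space.

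To prove that reduced inequality, I would introduce the centered deviations $a_x := \pi^*(x) - 1/p$, which satisfy $a_x \in [-1/p,\, 1 - 1/p]$ and $\sum_x a_x = 0$. The cancellation identity $\sum_{a_x > 0} a_x = \sum_{a_x < 0} |a_x|$ then yields $\|\mu_0^* - \pi^*\|_{TV} = 2\sum_{a_x < 0} |a_x|$. Each negative term satisfies $|a_x| \leq 1/p$, and at most $p - 1$ indices can have $a_x < 0$ (otherwise $\sum_x a_x$ would be strictly negative). Consequently $\sum_{a_x < 0} |a_x| \leq (p-1)/p$, which gives exactly the bound (\ref{mu0}). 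Tightness is immediate on taking $\pi^*$ to be a Dirac mass at any single state, since then $p - 1$ states contribute $1/p$ each and the supporting state contributes $1 - 1/p$, producing equality.

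The same Dirac-mass example also justifies the uniform choice: for any alternative $\mu_0^*$, letting $x^*$ be a state minimizing $\mu_0^*(x)$ and setting $\pi^* = \delta_{x^*}$ gives $\|\mu_0^* - \pi^*\|_{TV} = 2(1 - \mu_0^*(x^*)) \geq 2(1 - 1/p)$, so uniform initialization is the unique minimizer of the worst-case total variation distance.

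No real obstacle is anticipated — once the minimax reduction is made explicit, the rest is a short exercise in elementary optimization. The only interpretive subtlety is recognizing that the statement requires a \emph{strategic} choice of $\mu_0^*$; an arbitrary $\mu_0^*$ would permit TV distance as large as $2$ (e.g.\ two disjoint Dirac masses), so the uniform initialization is essential to the theorem and should be highlighted at the start of the proof.
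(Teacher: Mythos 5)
Your proof is correct, and it rests on the same structural idea as the paper's: take $\mu_0^*$ to be the uniform distribution and then bound the worst-case total variation distance to an arbitrary $\pi^*$. Where you genuinely diverge is in how that maximization is carried out. The paper recasts $\max_{\pi^*}\sum_i |1/p-\pi_i^*|$ as a linear program, introduces slack variables $\delta_i^+=\max\{0,\pi_i^*-1/p\}$ and $\delta_i^-=\max\{0,1/p-\pi_i^*\}$ with the complementarity constraint $\delta_i^+\delta_i^-=0$, and then rules out candidate optima through a case analysis on the counts $m+n\le p-2$, $m+n=p-1$, $m+n=p$ of strictly positive slacks. Your route — setting $a_x=\pi^*(x)-1/p$, using $\sum_x a_x=0$ to get $\sum_x|a_x|=2\sum_{a_x<0}|a_x|$, and then observing that each negative deviation is at most $1/p$ in magnitude and that at most $p-1$ of them can be strictly negative — reaches the same bound $2(1-1/p)$ in a few lines with no exchange arguments, and exhibits the same extremizer $\pi^*=\delta_{x_0}$. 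You also add something the paper omits: the lower bound $\max_{\pi^*}\|\mu_0^*-\pi^*\|_{TV}=2(1-\min_x\mu_0^*(x))\ge 2(1-1/p)$ for an arbitrary initial law, which shows the uniform initialization is not merely sufficient but necessary and uniquely optimal, and makes explicit the interpretive point (left implicit in the paper) that the theorem is really a minimax statement about a strategically chosen $\mu_0^*$ rather than a bound valid for every $\mu_0^*$. Your argument is shorter, tighter, and arguably more informative; the paper's LP formulation buys nothing extra here beyond familiarity with that style of optimization.
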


\begin{proof}
    Considering a discrete uniform p-dimension distribution $\tilde{\mu} \in \mathcal{P}(E)$, it can be expressed as 
    $\tilde{\mu}=(\tilde{\mu}_1,\tilde{\mu}_2,\cdots,\tilde{\mu}_p)^T=(\frac{1}{p},\frac{1}{p},\cdots,\frac{1}{p})^T$. According to the 
    triangle inequality, apparently, 
    \begin{equation}\label{triangle1}
        \|\mu_0^*-\pi^*\|_{TV} \leq \|\mu_0^*-\tilde{\mu}\|_{TV} + \|\tilde{\mu}-\pi^*\|_{TV}
    \end{equation}
    If it sets $\mu_0^* = \tilde{\mu}$, $\|\mu_0^*-\tilde{\mu}\|_{TV}$ will reach the minimum 0. Because 
    $\pi^*=(\pi_1^*,\pi_2^*,\cdots,\pi_p^*)^T$, the maximum of $\|\tilde{\mu}-\pi^*\|_{TV}$ can be described as the optimization 
    (\ref*{op1}).
    \begin{equation}\label{op1}
    \begin{aligned}
        \max_{\pi^*} \quad & \|\tilde{\mu}^*-\pi^*\|_{TV} = \sum_{i=1}^{p}\left |\frac{1}{p}-\pi_i^*\right | \\
        \mbox{s.t.}\quad & \sum_{i=1}^{p}\pi_i^* = 1,\quad \forall \pi^* \in \mathcal{P}(E) \\
        & 0 \leq \pi_i^* \leq 1,\quad i=1,2,\cdots,p
    \end{aligned}   
    \end{equation}
    To eliminate the effect of absolute values, there are two variables $\delta_i^+,\delta_i^-$ proposed. 
    \begin{equation*}
        \delta_i^+ := \max \left \{0,\pi_i^*-\frac{1}{p} \right \},\quad
        \delta_i^- := \max \left \{0,\frac{1}{p}-\pi_i^* \right \}
    \end{equation*}
    When $\pi_i^*-1/p \geq 0$, $\delta_i^+=\pi_i^*-1/p,\delta_i^-=0$. When $\pi_i^*-1/p < 0$, $\delta_i^+=0,\delta_i^-=1/p-\pi_i^*$. 
    So, there are two important properties about $\delta_i^+,\delta_i^-$. 
    \begin{equation*}
        \left |\pi_i^*-\frac{1}{p} \right | = \delta_i^++\delta_i^-, \quad
        \pi_i^*-\frac{1}{p} = \delta_i^+-\delta_i^-
    \end{equation*}
    (\ref{op1}) can be re-expressed by these two variables. It always exists
    \begin{equation}\label{d1d2_1}
        \sum_{i=1}^{p}\left |\frac{1}{p}-\pi_i^*\right | = \sum_{i=1}^{p}\left(\delta_i^++\delta_i^-\right)
    \end{equation}
    \begin{equation}\label{d1d2_2}
        \sum_{i=1}^{p}\pi_i^*=1+\sum_{i=1}^{p}\left(\pi_i^*-\frac{1}{p}\right)=1+\sum_{i=1}^{p}\left(\delta_i^+-\delta_i^-\right)=1
    \end{equation}
    \begin{equation}\label{d1d2_3}
        0 \leq \delta_i^+-\delta_i^-+\frac{1}{p} \leq 1,\quad \delta_i^+,\delta_i^-\geq 0
    \end{equation}
    \begin{equation}\label{d1d2_4}
        \delta_i^+\delta_i^- = \left((\pi_i^*-\frac{1}{p})*0\right)\mathbb{I}(\pi_i^*-1/p \geq 0) + 
        \left(0*(\frac{1}{p}-\pi_i^*)\right)\mathbb{I}(1/p-\pi_i^* < 0)=0
    \end{equation}
    By the conditions (\ref{d1d2_1})-(\ref{d1d2_4}), the original optimization (\ref{op1}) will have other equivalent form.
    \begin{equation}\label{op2}
    \begin{aligned}
        \max_{\delta^+,\delta^-} \quad & f(\delta^+,\delta^-) = \sum_{i=1}^{p}\left(\delta_i^++\delta_i^-\right) \\
        \mbox{s.t.}\quad & \sum_{i=1}^{p}\left(\delta_i^+-\delta_i^-\right)=0 \\
        & 0 \leq \delta_i^+ \leq 1-\frac{1}{p},\quad 0 \leq \delta_i^- \leq \frac{1}{p} \\ 
        & \delta_i^+\delta_i^-=0,\quad i=1,2,\cdots,p
    \end{aligned}
    \end{equation}
    Let $m,n$ count the number of $\delta_i^+>0$ and $\delta_i^->0$. Because (\ref{d1d2_4}), $m+n \leq p$. 
    \begin{equation*}
        m:=\sum_{i=1}^p\mathbb{I}(\delta_i^+>0),\quad n:=\sum_{i=1}^p\mathbb{I}(\delta_i^->0)
    \end{equation*}
    When $m+n \leq p-2$, $\exists j,k$, $\delta_i^+=\delta_i^-=0$ where $i=j,k$. So, a better solution $(\delta_i^+,\delta_i^-)$
    can be found by the initial solution $(\delta_i^{+*},\delta_i^{-*})$. It's worth noting that $1/p \leq 1 - 1/p$, 
    $(\delta_i^+,\delta_i^-)$ can satisfy (\ref{op2}) and have a larger $f(\delta^+,\delta^-)$. Thus, the best solution won't be in this 
    case. 
    \begin{equation*}
        (\delta_i^+,\delta_i^-)=
        \begin{cases}
            (\delta_i^{+*},\delta_i^{-*}),\quad &i\neq j,k\\
            (\frac{1}{p},0),&i=j \\
            (0,\frac{1}{p}),&i=k
        \end{cases}
    \end{equation*}
    When $m+n=p-1$, $\sum_{i=1}^{p}\delta_i^+$ and $\sum_{i=1}^{p}\delta_i^-$ have the upper bround $m(1-1/p)$ and $n/p$.
    \begin{equation*}
        \frac{n}{p}=\frac{p-1}{p}-\frac{m}{p} \leq m-\frac{m}{p} = m\left(1-\frac{1}{p}\right)
    \end{equation*}
    However, $\sum_{i=1}^{p}\delta_i^+ = \sum_{i=1}^{p}\delta_i^- \leq n/p$. And $\exists j,0<\delta_j^+<1-1/p$, 
    $\exists k,\delta_k^+=\delta_k^-=0$. It sets a small quantity $\varepsilon$. A better solution can be found with the similar idea. 
    So, the best solution won't also be in this case. 
    \begin{equation*}
        (\delta_i^+,\delta_i^-)=
        \begin{cases}
            (\delta_i^{+*},\delta_i^{-*}),\quad &i\neq j,k\\
            (\delta_i^{+*}+\varepsilon,0),&i=j \\
            (0,\varepsilon),&i=k
        \end{cases}
    \end{equation*}
    Importantly, when $m+n=p$, it always exists $m(1-1/p)\geq n/p$. So, in the best solution, 
    $\sum_{i=1}^{p}\delta_i^+ = \sum_{i=1}^{p}\delta_i^- = n/p$. Because $m\geq 1,n\leq p-1$, the maximum of $f(\delta^+,\delta^-)$ holds:
    \begin{equation*}
        \max_{\delta^+,\delta^-}f(\delta^+,\delta^-) =\max_{n} \frac{2n}{p} = 2\left(1-\frac{1}{p}\right)
    \end{equation*}
    When $\pi^*=(1,0,\cdots,0)^T$, the maximum will be reached. Thus, according to (\ref{triangle1}), (\ref{mu0}) has been demonstrated.
\end{proof}

The bounds of $\|\mu_n^*-\pi^*\|_{TV}$ in (\ref{triangle}) have been estimated so far. Next, it will pay attention to the TV distance 
between nMC and the mapping linear MC. 

\begin{Theorem}\label{T3}
    For any $n$, the TV distance $\|\mu_n-\mu_n^*\|_{TV}$ will have a upper bound. Moreover, it can expressed by
    \begin{equation}
        \|\mu_n-\mu_n^*\|_{TV} \leq 2\frac{e^{-(1+n\gamma)}+n\gamma}{1+n\gamma}
    \end{equation}
    where $\gamma$ is a small coefficient defined by (\ref{assumption1}).
\end{Theorem}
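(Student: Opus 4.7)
The plan is to exploit Assumption 1 as a purely multiplicative control between $P^*$ and $P_{\mu_n}$, push that control through $n$ iterations of the nonlinear recursion $\mu_n^T = \mu_{n-1}^T \mathbb{P}_{\mu_{n-1}}$, and then blend two complementary bounds so that the result collapses to the closed form in the statement. I will assume throughout that the nMC and its linear companion share the initial distribution $\mu_0 = \mu_0^*$; this is implicit in Assumption 2 (common observed trajectory), and without it the stated bound could not depend only on $n$ and $\gamma$.

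The first step would be to convert the pointwise ratio inequality $P^*(x,y)\leq(1+\gamma)P_{\mu_n}(x,y)$ into a one-step total variation bound on the kernels. Since both $P^*(x,\cdot)$ and $P_{\mu_n}(x,\cdot)$ sum to one, the positive excess $(P^*-P_{\mu_n})^+(x,y)$ is controlled by $\gamma\,P^*(x,y)/(1+\gamma)$, and summing over $y$ yields $\|P^*(x,\cdot)-P_{\mu_n}(x,\cdot)\|_{TV}\leq 2\gamma/(1+\gamma)\leq 2\gamma$. A telescoping identity $\mu_n-\mu_n^* = \sum_{k=1}^{n}\mu_{k-1}(\mathbb{P}_{\mu_{k-1}}-P^*)(P^*)^{n-k}$, combined with the contractivity of $(P^*)^{n-k}$ on signed measures, then produces the additive linear-in-$n$ bound $\|\mu_n-\mu_n^*\|_{TV}\leq 2n\gamma$.

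The second step is complementary. Iterating $\mu_n(y)\geq(1+\gamma)^{-1}(\mu_{n-1}P^*)(y)$ by induction gives the elementwise inequality $\mu_n\geq(1+\gamma)^{-n}\mu_n^*$. Writing $\mu_n=(1+\gamma)^{-n}\mu_n^*+R_n$ with $R_n\geq 0$ of total mass $1-(1+\gamma)^{-n}$ and splitting $|\mu_n-\mu_n^*|$ into its positive and negative parts yields the multiplicative bound $\|\mu_n-\mu_n^*\|_{TV}\leq 2\bigl(1-(1+\gamma)^{-n}\bigr)$, which saturates at $2$ rather than growing linearly with $n$.

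The main obstacle, and the step I expect to be the hardest, is the final algebraic manipulation that glues these two estimates into exactly $2(e^{-(1+n\gamma)}+n\gamma)/(1+n\gamma)$. The factor $e^{-(1+n\gamma)}$ strongly suggests substituting $(1+\gamma)^n\leq e^{n\gamma}$ into the multiplicative bound, while the denominator $1+n\gamma$ is the Bernoulli inequality $(1+\gamma)^n\geq 1+n\gamma$; the stated expression looks like a weighted combination of the two, or equivalently an integral identity such as $(1-e^{-(1+n\gamma)})/(1+n\gamma)=\int_0^1 e^{-s(1+n\gamma)}\,ds$ applied to the rewritten form $2-2(1-e^{-(1+n\gamma)})/(1+n\gamma)$. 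Finding the right mixing weight, or setting up an auxiliary optimization whose extremum is precisely the stated expression, is where the delicate work of my plan would concentrate.
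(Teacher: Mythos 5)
There is a genuine gap: your plan stops exactly where the theorem's specific constant has to come from, and the two estimates you do establish cannot be combined into the stated bound. Your telescoping bound $\|\mu_n-\mu_n^*\|_{TV}\le 2n\gamma/(1+\gamma)$ and your elementwise bound $\mu_n\ge(1+\gamma)^{-n}\mu_n^*$ (hence $\|\mu_n-\mu_n^*\|_{TV}\le 2\bigl(1-(1+\gamma)^{-n}\bigr)$) are both correct under the shared-initial-distribution convention, and in the regime of small $n\gamma$ they are actually \emph{sharper} than the theorem (the stated bound does not vanish at $n=0$). But no mixing weight exists: for moderately large $n\gamma$ the stated bound is strictly \emph{smaller} than the minimum of your two bounds. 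For instance, with $\gamma=0.1$ and $n=50$ your better bound gives $2(1-1.1^{-50})\approx 1.98$, while $2(e^{-6}+5)/6\approx 1.67$. Since an upper bound cannot be deduced from strictly larger upper bounds, the final "gluing" step you flag as delicate is not merely delicate --- it is impossible from the ingredients you have assembled. The reason is that both of your estimates live at the level of one-step marginals, whereas the factor $\frac{e^{-u}+u-1}{u}$ is intrinsically a path-level quantity.

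The paper's route supplies the missing idea: it introduces the trajectory likelihood ratio $\rho_n=\prod_{i=0}^{n-1}P^*(\tilde{x}_i^1,\tilde{x}_{i+1}^1)/P_{\mu_i}(\tilde{x}_i^1,\tilde{x}_{i+1}^1)$, observes that $\tfrac12\|\mu_n-\mu_n^*\|_{TV}=\mathbb{E}\bigl[(1-\rho_n)\mathbb{I}(\rho_n<1)\bigr]$, and controls the moments via Assumption 1: $\mathbb{E}(\rho_n^k)\le(1+\gamma)^{n(k-1)}$ with $\mathbb{E}(\rho_n)=1$. Applying $1-\rho_n\le e^{-\rho_n}$, expanding the exponential, and summing the moment bounds against the Taylor coefficients yields $\sum_{k\ge 2}\frac{(-1)^k}{k!}u^{k-1}=\frac{e^{-u}+u-1}{u}$ with $u=(1+\gamma)^n$, and the statement follows from the approximation $u\approx 1+n\gamma$. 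This change-of-measure mechanism --- comparing the two chains on path space rather than marginal by marginal --- is entirely absent from your proposal and is precisely what produces the claimed closed form. (As an aside, your multiplicative bound $2(1-u^{-1})$ is in fact tighter than the paper's pre-approximation bound $2(1-(1-e^{-u})/u)$ at the same $u$; the discrepancy above arises only after the paper's non-rigorous substitution $u\mapsto 1+n\gamma$. If you wanted to salvage your approach, the honest conclusion would be your own bound, not the theorem as stated.)
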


\begin{proof}
    First, $\rho_n$ will be defined by (\ref{rho}), which is the most important in the proof.
    \begin{equation}\label{rho}
        \rho_n := \prod_{i=0}^{n-1}\frac{P^*(\tilde{x}_i^1,\tilde{x}_{i+1}^1)}{P_{\mu_i}(\tilde{x}_i^1,\tilde{x}_{i+1}^1)}
    \end{equation}
    Actually, $(\tilde{x}_n^1)$ is the observed trajectory of nMC. When $\tilde{x}_i^1$ is fixed, $\tilde{x}_{i+1}^1$ can be regarded as
    a sample from $\mu_{i+1}$. By the way, $\tilde{x}_i^1$ is one of the coupling Markov $(\tilde{X}_i^1,\tilde{X}_i^2)$. But in this case, 
    these Markov chains aren't homogeneous, which means the theory about $\mathbb{M}$ can't be uesd. Considering a trajectory $\mathcal{A}$ 
    from 0 to n step on the state space $\mathscr{S}^n$, the $k$ th $(k\geq 1)$ moment of $\rho_n$ holds
    \begin{align*}
        \mathbb{E}(\rho_n^k) &= \sum_{\mathcal{A}}\left(
            \prod_{i=0}^{n-1}\frac{P^*(\tilde{x}_i^1,\tilde{x}_{i+1}^1)}{P_{\mu_i}(\tilde{x}_i^1,\tilde{x}_{i+1}^1)}
        \right)^k\left(
            \prod_{i=0}^{n-1} P_{\mu_i}(\tilde{x}_i^1,\tilde{x}_{i+1}^1)
        \right)\mu_0(\tilde{x}_0^1)\\
        &=\sum_{\mathcal{A}}\prod_{i=0}^{n-1}\left[
            \left(
                \frac{P^*(\tilde{x}_i^1,\tilde{x}_{i+1}^1)}{P_{\mu_i}(\tilde{x}_i^1,\tilde{x}_{i+1}^1)}
            \right)^{k-1}P^*(\tilde{x}_i^1,\tilde{x}_{i+1}^1)
        \right]\mu_0(\tilde{x}_0^1)\\
        &\leq \sum_{\mathcal{A}}\prod_{i=0}^{n-1} (1+\gamma)^{k-1}P^*(\tilde{x}_i^1,\tilde{x}_{i+1}^1)\mu_0(\tilde{x}_0^1) \\
        &= \sum_{\tilde{x}_0^1}(1+\gamma)^{n(k-1)}\mu_0(\tilde{x}_0^1) = (1+\gamma)^{n(k-1)}
    \end{align*}
    Second, the definition (\ref{rho}) will be used for re-expression of the TV distance. 
    \begin{align*}
        \frac{1}{2}\|\mu_n-\mu_n^*\|_{TV} &= 1-\sum_x\left(1\wedge \frac{\mu_n^*(x)}{\mu_n(x)}\right)\mu_n(x) \\
        &= 1-\sum_A \left(
            1\wedge \prod_{i=0}^{n-1}\frac{P^*(\tilde{x}_i^1,\tilde{x}_{i+1}^1)}
            {P_{\mu_i}(\tilde{x}_i^1,\tilde{x}_{i+1}^1)}
            \right)\prod_{i=0}^{n-1}P_{\mu_i}(\tilde{x}_i^1,\tilde{x}_{i+1}^1)\mu_0(\tilde{x}_0^1) \\
        & = 1-\mathbb{E}(1\wedge \rho_n) = \mathbb{E}[(1-\rho_n)\mathbb{I}(\rho_n < 1)]
    \end{align*}
    $\rho_n \in (0,(1+\gamma)^n)$, but $\mathbb{E}[(1-\rho_n)\mathbb{I}(\rho_n < 1)]$ will limit $\rho_n$. Please pay attention to
    \begin{equation*}
        1-\rho_n \leq e^{-\rho_n},\quad \forall \rho_n \in (0,+\infty)
    \end{equation*}
    So, the inequality will be used to relax the bounds. And an important result can be provided. 
    \begin{align*}
        \mathbb{E}[(1-\rho_n)\mathbb{I}(\rho_n < 1)] &\leq \mathbb{E}(e^{-\rho_n}) = 
        \mathbb{E}\left(\sum_{i=0}^\infty \frac{(-\rho_n)^i}{i!}\right) = \sum_{i=0}^\infty 
        \left(\mathbb{E}\frac{(-\rho_n)^i}{i!}\right) \\
        &= \sum_{i=2}^\infty\frac{(-1)^i}{i!}\mathbb{E}(\rho_n^i) \leq \sum_{i=2}^\infty\frac{(-1)^i}{i!}(1+\gamma)^{n(i-1)}\\
        &= \frac{1}{(1+\gamma)^n}\left(e^{-(1+\gamma)^n}+(1+\gamma)^n-1\right)
    \end{align*}
    Because $\gamma$ is a small quantity, $(1+\gamma)^n \approx 1+n\gamma$ for any $n$. Then, the theorem has been demonstrated. 
\end{proof}

Because nMC is regarded as MC with small nonlinear perturbations, let $\delta$ be a small quantity, the TV distance between two limiting
distributions can be set as:
\begin{equation*}
    \|\pi-\pi^*\|_{TV} \leq \delta
\end{equation*}
Then, according to the Theorem \ref{T1},\ref{T2},\ref{T3}, the main resluts about convergence bounds has be provided as the following
Theorem.

\begin{Theorem}
    For the nonlinear Markov chain with small perturbations, if it exists the convergence bounds, it will be estimated by this method. 
    When the time step $n$ isn't large enough, the bounds hold,
    \begin{equation}
        \|\mu_n-\pi\|_{TV} \leq 2e^{-1} + \delta + 2(r(\mathbb{M})+\varepsilon)^n\left(1-\frac{1}{p}\right)
    \end{equation}
    And when $n$ is large enough, it will exist,
    \begin{equation}
        \|\mu_n-\pi\|_{TV} \leq 2\delta + 2(r(\mathbb{M})+\varepsilon)^n\left(1-\frac{1}{p}\right)
    \end{equation}
    If it doesn't need to estimate precisely, $\delta$ can be set to 0.
\end{Theorem}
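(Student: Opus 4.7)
The plan is to assemble Theorems \ref{T1}, \ref{T2}, \ref{T3} together with the small-perturbation hypothesis $\|\pi-\pi^*\|_{TV} \leq \delta$ via the triangle inequality (\ref{triangle}). First I would chain Theorems \ref{T1} and \ref{T2}, choosing the initial distribution $\mu_0^*$ to be the uniform distribution $\tilde{\mu}$ used in the proof of Theorem \ref{T2} so that $\|\mu_0^*-\pi^*\|_{TV}$ saturates at $2(1-1/p)$. This delivers $\|\mu_n^*-\pi^*\|_{TV} \leq 2(r(\mathbb{M})+\varepsilon)^n(1-1/p)$, which is the geometric term common to both stated inequalities. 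The middle term $\|\pi-\pi^*\|_{TV}$ is bounded directly by $\delta$ from the perturbation hypothesis, producing one copy of $\delta$ in each case.

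The remaining piece $\|\mu_n - \mu_n^*\|_{TV}$ must then be treated in two regimes. For the first inequality (``$n$ not large enough''), Theorem \ref{T3} gives an upper bound of $2(e^{-(1+n\gamma)}+n\gamma)/(1+n\gamma)$; since $\gamma$ is small and $n\gamma$ is still negligible, the numerator collapses to $e^{-1}$ and the denominator to $1$, so this contribution reduces to $2e^{-1}$. For the second inequality (``$n$ large enough''), Theorem \ref{T3} alone is not enough because its right-hand side actually tends to $2$ as $n\gamma \to \infty$; instead, I would re-apply the triangle inequality to write $\|\mu_n - \mu_n^*\|_{TV} \leq \|\mu_n - \pi\|_{TV} + \|\pi - \pi^*\|_{TV} + \|\pi^* - \mu_n^*\|_{TV}$ and argue that both chains, by the hypothesized ergodicity, have entered asymptotically negligible neighborhoods of their respective stationary distributions, leaving only $\delta$ as a residual and contributing the extra $\delta$ in the $2\delta$ coefficient.

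The main obstacle I expect is exactly this regime switch. Theorem \ref{T3} does not decay in $n$, so the large-$n$ inequality cannot be obtained by direct substitution and requires the separate ergodicity argument sketched above. Making it rigorous would mean identifying an explicit threshold $n^*$ depending on $\gamma$, $r(\mathbb{M})$, and $\delta$, beyond which the Theorem \ref{T3} bound is superseded by the perturbation-order bound via ergodicity of both $(X_n)$ and $(X_n^*)$, and then verifying that the two regimes agree at the crossover. The rest of the work is a routine triangle-inequality assembly that just substitutes the three established bounds and collects terms.
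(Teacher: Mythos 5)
Your proposal is correct and follows essentially the same route as the paper, which offers no explicit proof of this theorem beyond asserting that it follows from Theorems \ref{T1}, \ref{T2}, \ref{T3} and the triangle inequality (\ref{triangle}); your assembly of the geometric term via \ref{T1}--\ref{T2}, the $\delta$ from the perturbation hypothesis, and the $2e^{-1}$ from \ref{T3} with $n\gamma$ treated as negligible is exactly that argument. Your additional observation that the Theorem \ref{T3} bound does not decay in $n$ (it tends to $2$), so the large-$n$ inequality must instead lean on the hypothesized ergodicity of both chains to replace $\|\mu_n-\mu_n^*\|_{TV}$ by $\delta$, is more explicit than anything in the paper, which silently elides this point behind the phrase ``when $n$ is large enough.''
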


There are two numerical examples for explaining and presenting the main results, which are the same examples in \cite{Shchegolev3}. 
Moreover, in the two examples, the transition matrix will be provided, but the initial distribution is unknown. So, let $\mu_0$ be a random
distribution defined on $\mathscr{S}^p$, with nMC $(X_n)_{n \in \mathbb{Z}_+}$ defining on $\mathscr{S}^p(E,2^E)$. The different nMC are  
simulated for 1000 times. 

\begin{Example}
    Considering a discrete nMC defined on $\mathscr{S}(\{1,2,3,4\},2^{\{1,2,3,4\}})$ with distribution 
    $\mu_n=(\mu_n^1,\mu_n^2,\mu_n^3,\mu_n^4)^T$, the nMC transition probability matrix will be expressed
    \begin{equation*}
        \mathbb{P}_{\mu_i} = 
        \begin{pmatrix}
           0.4-\kappa\mu_i^1 & 0.2 & 0.2+\kappa\mu_i^1 & 0.2 \\
           0.3 & 0.4 & 0.2 & 0.1 \\
           0.2 & 0.2 & 0.4 & 0.2 \\
           0.2 & 0.1 & 0.2 & 0.5  
        \end{pmatrix},\quad
        \mathbb{P} = 
        \begin{pmatrix}
           0.4 & 0.2 & 0.2 & 0.2 \\
           0.3 & 0.4 & 0.2 & 0.1 \\
           0.2 & 0.2 & 0.4 & 0.2 \\
           0.2 & 0.1 & 0.2 & 0.5 
        \end{pmatrix}
    \end{equation*}
    The mapping Markov chains can be provided by the transition matrix $\mathbb{P}^*$. For coupling markov, the transition probability
    matrix $\mathbb{M}_{12 \times 12}$ can be calculated by (\ref{M}). 

    According to (\ref{alpha}), the M-D condition bounds can be obtained. 
    $2(1-\alpha_1)=0.8,\,2(1-\alpha_2)=0.3,\,2(1-\alpha_3)=0.11,\,2(1-\alpha_4)=0.04,\,\cdots$
    And the new bounds estimated by our spectral radius method can hold
    $2(1-1/p)(r(\mathbb{M})+\varepsilon)^1=0.54,\,2(1-1/p)(r(\mathbb{M})+\varepsilon)^2=0.20,\,2(1-1/p)(r(\mathbb{M})+\varepsilon)^3=0.07,\,\cdots$
    For further test, the nonlinear coefficient $\kappa$ will be set as 0.1 and 0.2 for simulation in figure 3. 
    \begin{figure}[htb]
        \centering
        \captionsetup[subfigure]{labelformat=empty}
        \begin{subfigure}{.48\textwidth}
            \includegraphics[width=\linewidth]{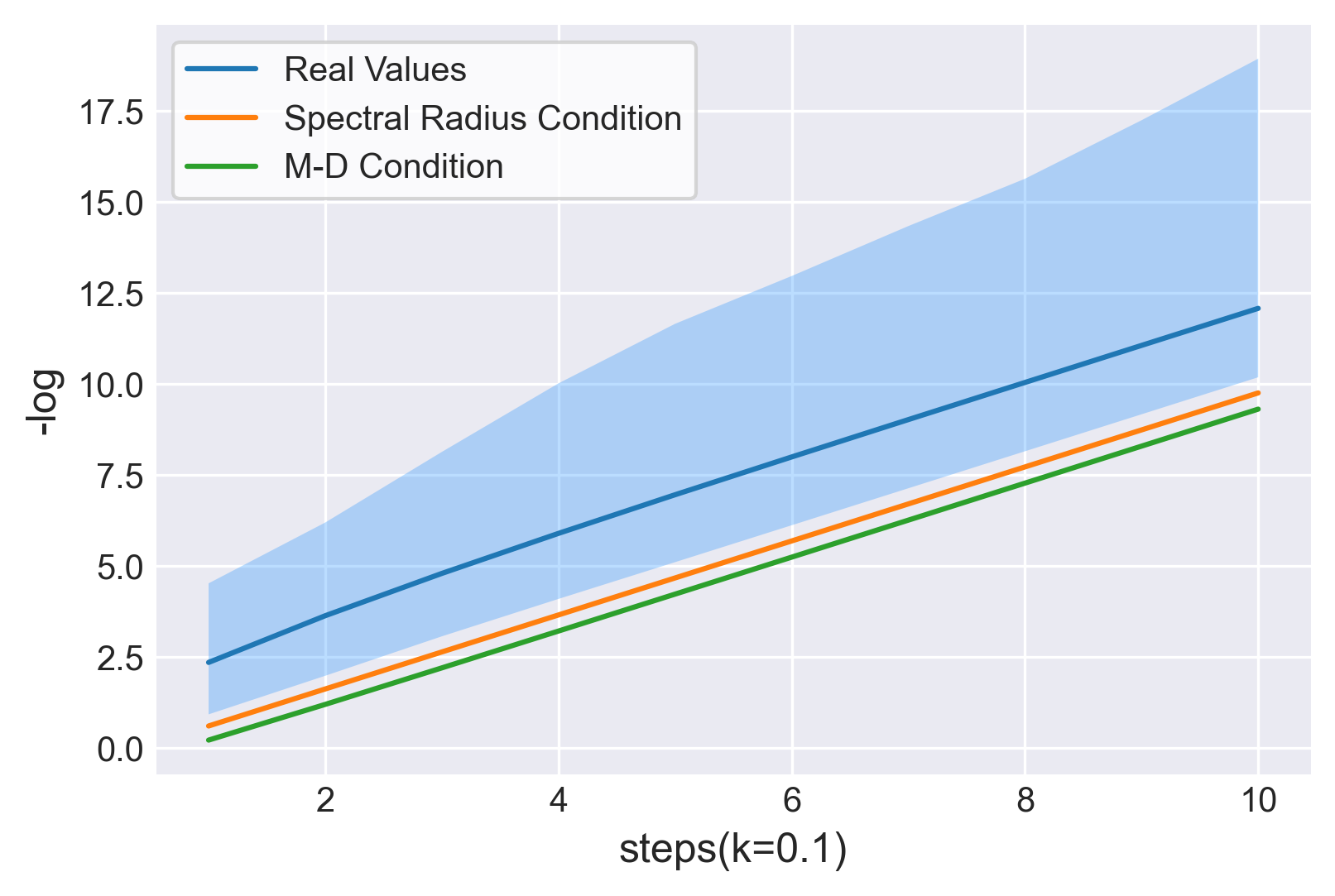}
        \end{subfigure}
        \begin{subfigure}{.48\textwidth}
            \includegraphics[width=\linewidth]{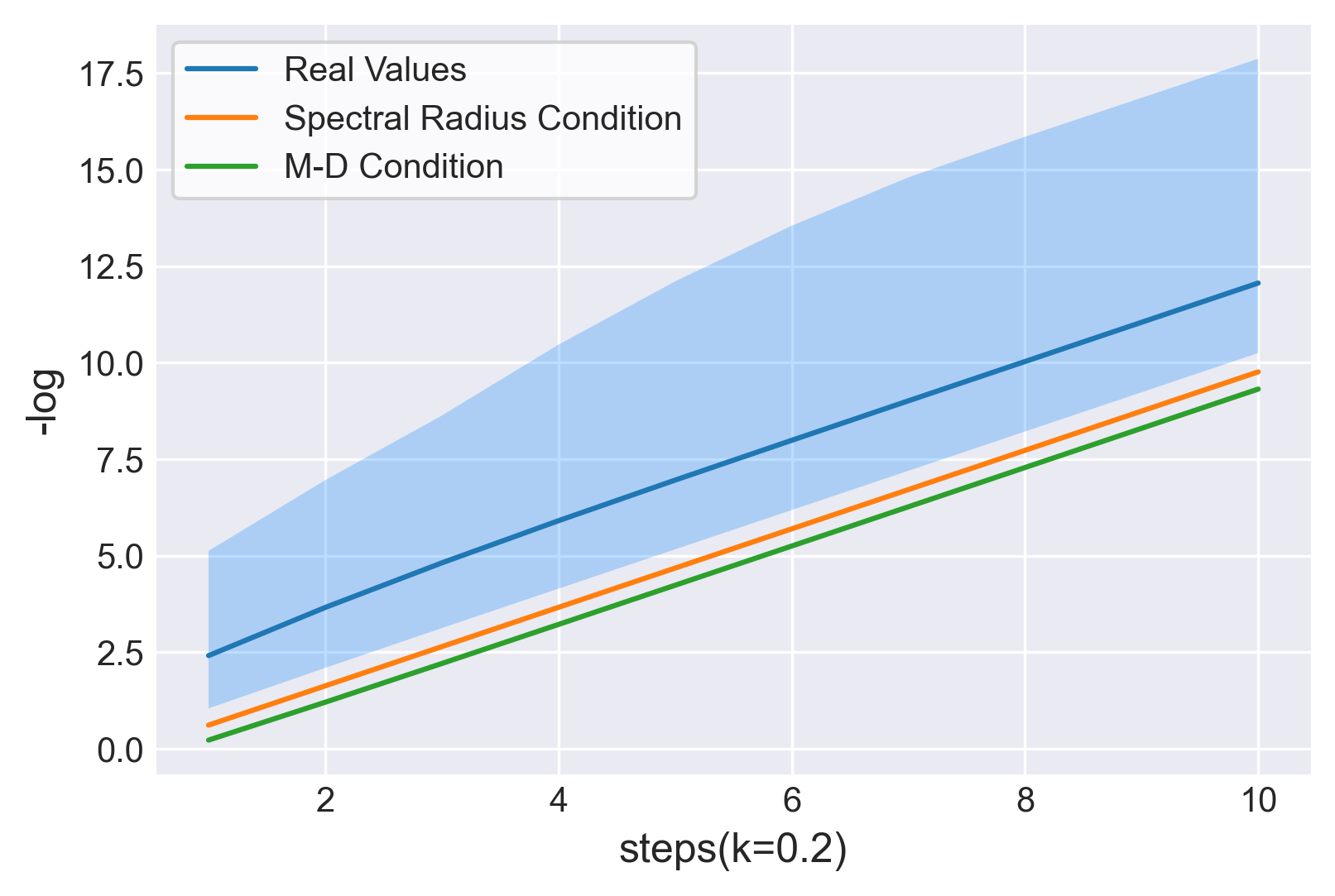}
        \end{subfigure}
        \caption{Estimated convergence bounds and interval about the true values in example 1}
    \end{figure}
\end{Example}

\begin{Example}
    On the basis of example 1, a more complex example will be researched. The state space $\mathscr{S}(\{1,2,3,4,5\},2^{\{1,2,3,4,5\}})$, 
    transition probability matrix $\mathbb{P}_{\mu_i},\mathbb{P}^*$ of nMC and linear MC hold
    \begin{gather*}
        \mathbb{P}_{\mu_i}=
        \begin{pmatrix}
            0.4+\kappa\mu_i^1 & 0.3-\kappa\mu_i^1 & 0.1 & 0.1 & 0.1 \\
            0.2 & 0.4+\kappa\mu_i^2 & 0.2-\kappa\mu_i^2 & 0.1 & 0.1 \\
            0.1 & 0.2 & 0.4+\kappa\mu_i^3 & 0.2-\kappa\mu_i^4 & 0.1 \\
            0.1 & 0.1 & 0.2 & 0.4+\kappa\mu_i^4 & 0.2-\kappa\mu_i^4 \\
            0.1 & 0.1 & 0.1 & 0.3-\kappa\mu_i^5 & 0.4+\kappa\mu_i^5
        \end{pmatrix} \\
        \\
        \mathbb{P}^*=
        \begin{pmatrix}
            0.4 & 0.3 & 0.1 & 0.1 & 0.1 \\
            0.2 & 0.4 & 0.2 & 0.1 & 0.1 \\
            0.1 & 0.2 & 0.4 & 0.2 & 0.1 \\
            0.1 & 0.1 & 0.2 & 0.4 & 0.2 \\
            0.1 & 0.1 & 0.1 & 0.3 & 0.4
        \end{pmatrix}
    \end{gather*}
    Similarly, it's easy to recognize the spectral radius bounds is smaller than the M-D bounds. The nonlinear coefficient $\kappa$ is 
    also set to 0.1 and 0.2. The comparison is shown in figure 4.
    \begin{figure}[htb]
        \centering
        \captionsetup[subfigure]{labelformat=empty}
        \begin{subfigure}{.48\textwidth}
            \includegraphics[width=\linewidth]{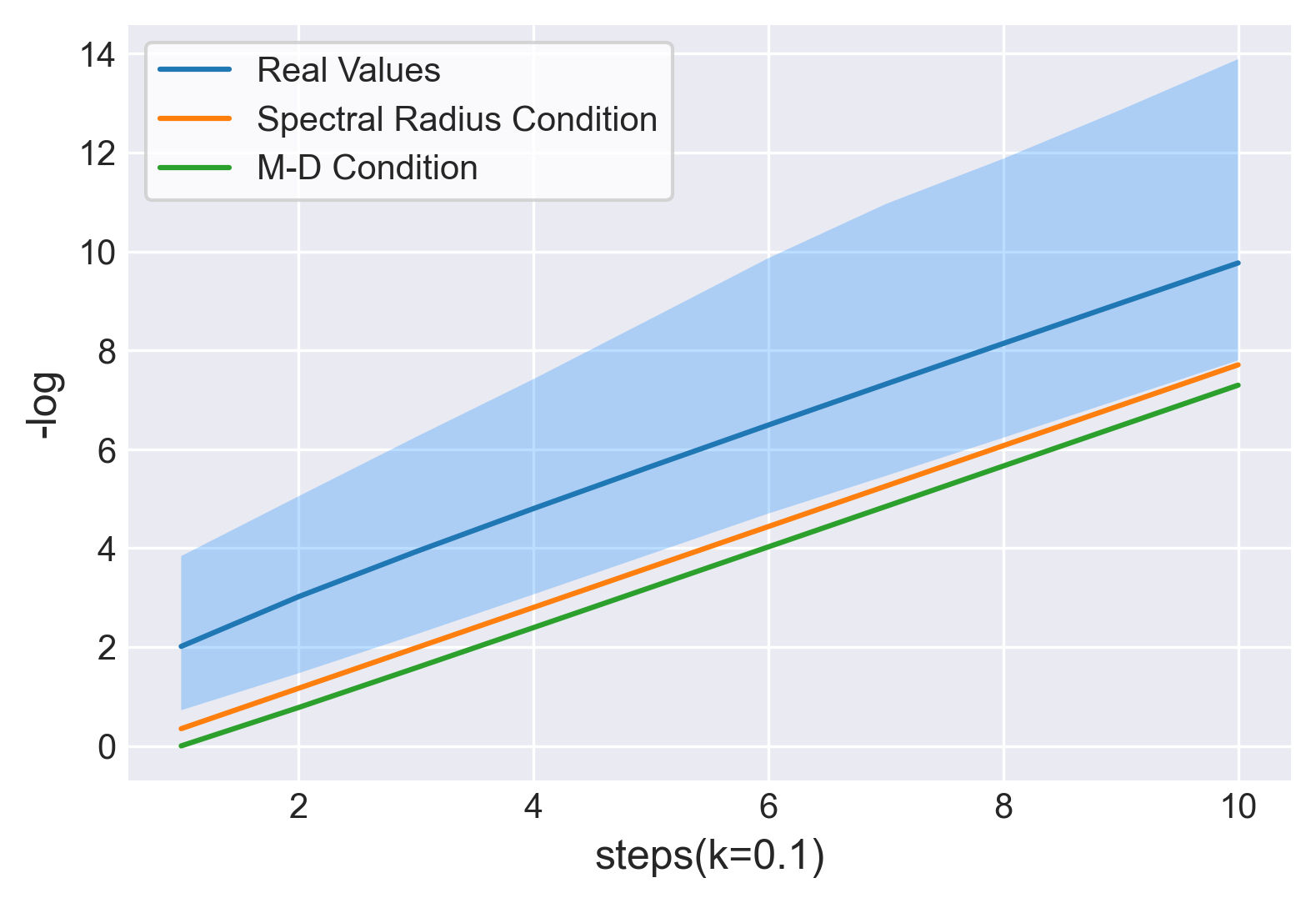}
        \end{subfigure}
        \begin{subfigure}{.48\textwidth}
            \includegraphics[width=\linewidth]{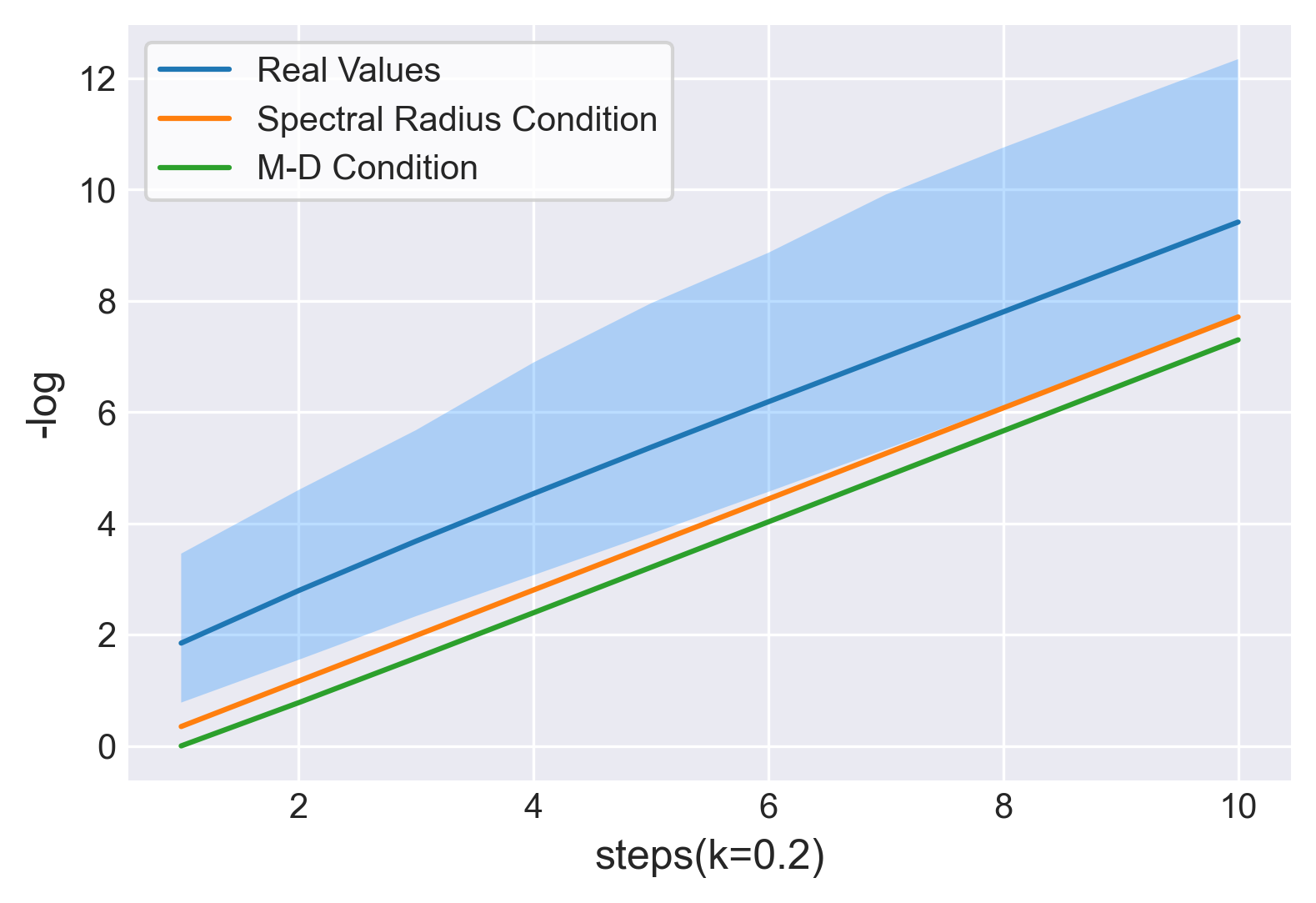}
        \end{subfigure}
        \caption{Estimated convergence bounds and interval about the true values in example 2}
    \end{figure}
\end{Example}

Through these two examples, it has been verified to the main result about convergence bounds of nMC. In addition, the spectral radius 
bounds are better than M-D bounds. In some situations, spectral radius method has even clung to the lower bound of the real TV distance. 
So, the main result can be regarded as an precise enough approach.

\section{Application}\label{sec4}
The TV distance is a kind of distance between two different distributions. If $\|\mu_n-\pi\|_{TV}=0$, the stochastic process will reach
a statistical steady state. If the convergence bounds are large, the volatility of time series will be also large, and it needs a lot of 
time to become steady. Thus, by the simple idea, the convergence bounds through special radius can measure the volatility of series. 
Then, a specific algorithm is provided as follows. 

\begin{figure}[htbp]
    \centering
    \includegraphics[width=\linewidth]{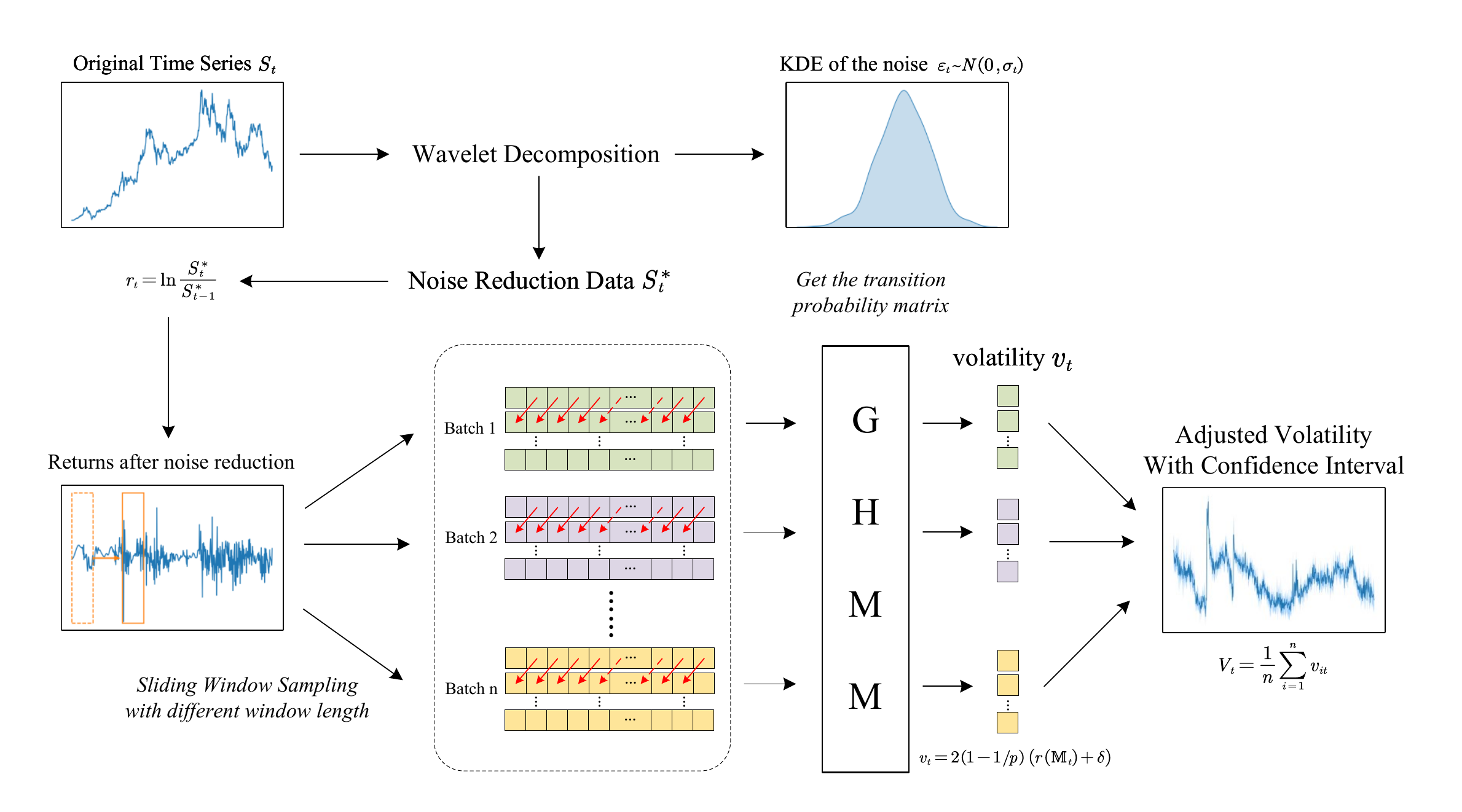}
    \caption{The framework about estimating the volatility of time series through the convergence bounds}
\end{figure} 

Figure 5 shows the framework of the approach to estimate the volatility of securities. First, when it accepts a original daily price 
series $S_t$, the series will be reconstructed through Wavelet analysis for removing the noise mixed in the data. Then, the returns can 
be calculated by the noise reduction data $S_t^*$ according to $r_t = \frac{\ln S_t^*}{\ln S_{t-1}^*}$. For getting a more credible 
result, the sliding windows sampling with different windows length is applied to get many batches of series. These batches can be sent 
to Gaussian Hidden Markov Model (GHMM) to estimate the transition probability matrix. It's worth noting that the convergence bounds 
method provided in section \ref{sec3} always need a markov chain with discontinuous states. Although $r_t$ is continuous, GHMM can 
transform it to discrete hidden sequences $X_t$. At least, the volatility can be provided by the mean of the estimation from different 
batches. If the process is repeated for many times, it will get the confidence interval of the volatility. Moreover, the changes of 
securities price have a complex mechanism. The traditional methods always need a couple of assumptions or can't fit the real situation 
very well. But nMC can relax the restrictions (\ref{def1}), it's ideal for dealing with this type of problem.

\begin{figure}[htbp]
    \centering
    \includegraphics[width=0.8\linewidth]{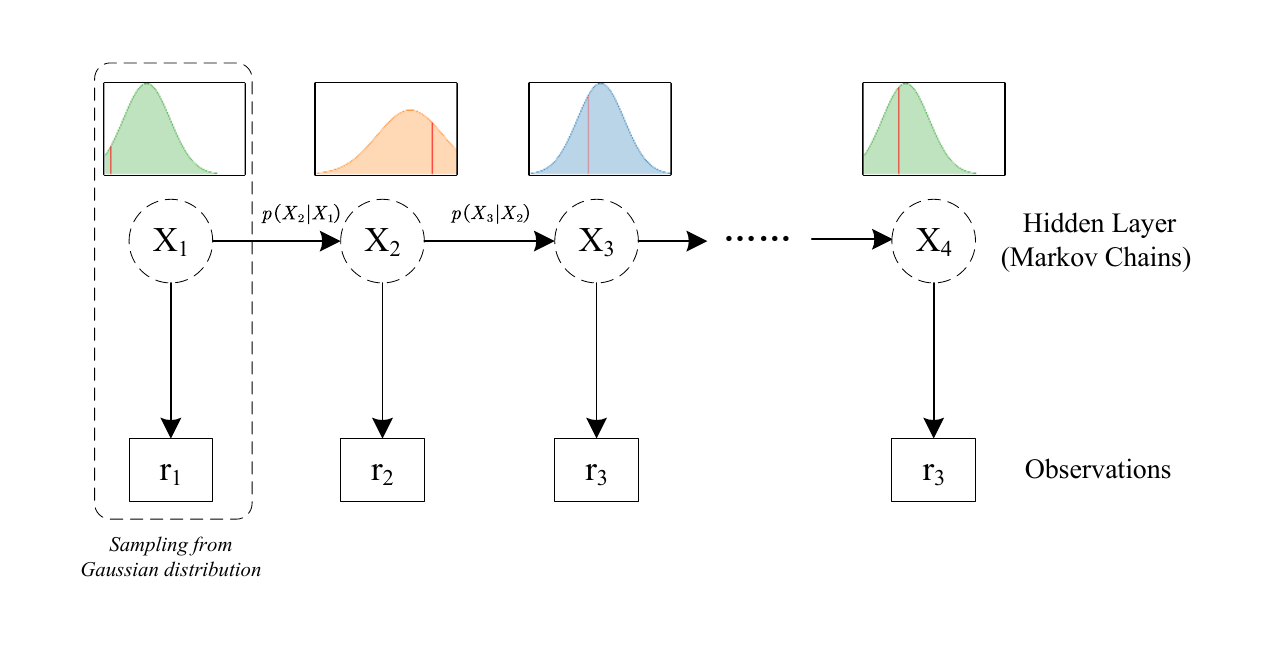}
    \caption{Principle about the Gaussian Hidden Markov Model}
\end{figure}

In GHMM, there are two different components, hidden layer and observations. Hidden layer is a markov chain with discrete states. Every 
state is a Gaussian distribution with different $\mu$ and $\sigma$. And observations are regarded as sample from the Gaussian distribution.
Additionally, the transition probability and the parameter of Gaussian distribution can be estimated through Baum-Welch algorithm. 

To test and apply the method, it estimates the volatility of securities. Yahoo Finance has provided the adjusted close price of TSLA and 
AMC from 2019-11-27 to 2022-11-23 (754 trading days). Because the returns series are more important for the problem, Table 1 show some
basic statistical information of the returns and hypothesis test. In the table, TSLA means the returns of TSLA and TSLA$^\star$ means 
the returns of the noise reduction data, noise means the noise removed from the original data. Additionally, K-S test is in order to 
discern whether the returns or noise is respect to normal distribution, and Ljung-Box test (L-B test) whose the maximum of lags is 12 
can be a white-noise test that makes sure the research is meaningful. To express the significance level, $^{***}$ is equal to $p<0.005$, 
$^{**}$ is equal to $p<0.01$, and $^*$ is equal to $p<0.05$. 

\begin{table}[htbp]
    \caption{Descriptive statistical analysis table}
    \centering
    \begin{tabular}{lccccll}
    \toprule
    \makecell[c]{Type} & Mean & Std & Skewness & Kurtosis & \makecell[c]{K-S test} & \makecell[c]{L-B test} \\
    \midrule
    TSlA & 0.003 & 0.045 & -0.260 & 3.306 & 0.066$^{***}$ & 16.323\\
    TSLA$^\star$ & 0.003 & 0.022 & -0.501 & 5.297 & 0.082$^{***}$ & 215.407$^{***}$\\
    noise & -0.005 & 6.900 & -0.111 & 0.108 & 0.019 & 1121.273$^{***}$\\
    \\
    AMC & -0.000 & 0.105 & 2.778 & 49.106 & 0.134$^{***}$ & 34.181$^{***}$\\
    AMC$^\star$ & 0.000 & 0.075 & 2.812 & 136.302 & 0.230$^{***}$ & 145.523$^{***}$\\
    noise & 0.011 & 1.694 & 0.485 & 2.117 & 0.073$^{***}$ & 1516.709$^{***}$ \\
    \bottomrule
    \end{tabular}
\end{table}

According to the descriptive statistical analysis, the noise reduction returns (TSLA$^\star$ and AMC$^\star$) have significant Spikes and 
thick tail, but aren't white-noise. It's worth noting that the rest noise still isn't a white-noise, which means there are more 
information included in the data. However, it isn't the essential of this research. 

Another indicator, conditional Volatility by GARCH, is usually used to estimate the volatility of securities. For comparison, GARCH(1,1)
fitted the returns of two stocks to estimate the volatility. Table 2 has shown the result of GARCH(1,1). It's easy to find the 
coefficients of the model basically have high significance level, so the model can fit the data very well and the conditional volatility
can be accepted. Additionally, GARCH(p,q) model with higher p and q can be also applied for estimating the volatility. But after testing,
the significance levels of coefficients have been out of the acceptable range. Thus, GARCH(1,1) is more advocated. 

\begin{table}[htbp]
\centering
\caption{GARCH Model Result} 
\begin{subtable}[t]{0.48\linewidth}
    \centering
    \caption{GARCH(1,1) for TSLA}
    \begin{tabular}{lccl}
        \toprule
                    & coef      & std err   & \makecell[c]{t}  \\
        \midrule
        $\mu$       & 2.34E-03  & 1.39E-03  & 1.68 \\
        $\omega$    & 3.44E-05  & 1.31E-05  & 2.63$^{**}$ \\
        $\alpha_1$  & 5.00E-02  & 2.07E-02  & 2.42$^*$ \\
        $\beta_1$   & 0.93      & 1.50E-02  & 61.98$^{***}$ \\
        \bottomrule
    \end{tabular}
\end{subtable}
\hspace{-0.5cm}
\begin{subtable}[t]{0.48\linewidth}
    \centering
    \caption{GARCH(1,1) for AMC}
    \begin{tabular}{lccl}
        \toprule
                    & coef      & std err   & \makecell[c]{t} \\
        \midrule
        $\mu$       &-6.75E-03  & 4.43E-03  & -1.52 \\
        $\omega$    & 1.07E-03  & 5.32E-04  & 2.01$^*$ \\
        $\alpha_1$  & 0.42      & 0.19      & 2.18$^*$ \\
        $\beta_1$   & 0.58      & 9.27E-02  & 6.28$^{***}$ \\
        \bottomrule
    \end{tabular}
\end{subtable}
\end{table}

The Gaussian HMM is set to have 3 potential states and 15 epochs to train the GaussianHMM for avoiding overfitting. The wavelet basis 
functions for denoising is Daubechies 8 (db8), which can guarantee the noise reduction data is smooth. The algorithm to estimate 
volatility need a series of sliding windows with different length. It sets the maximum of length to 80, and the minimum of length to 60. 
Then, TV volatility and GARCH volatility of TSLA and AMC can be calculated. Figure 7 and figure 8 show the details about these two kinds 
of indicators. 

\begin{figure}[htbp]
    \centering
    \includegraphics[width=\linewidth]{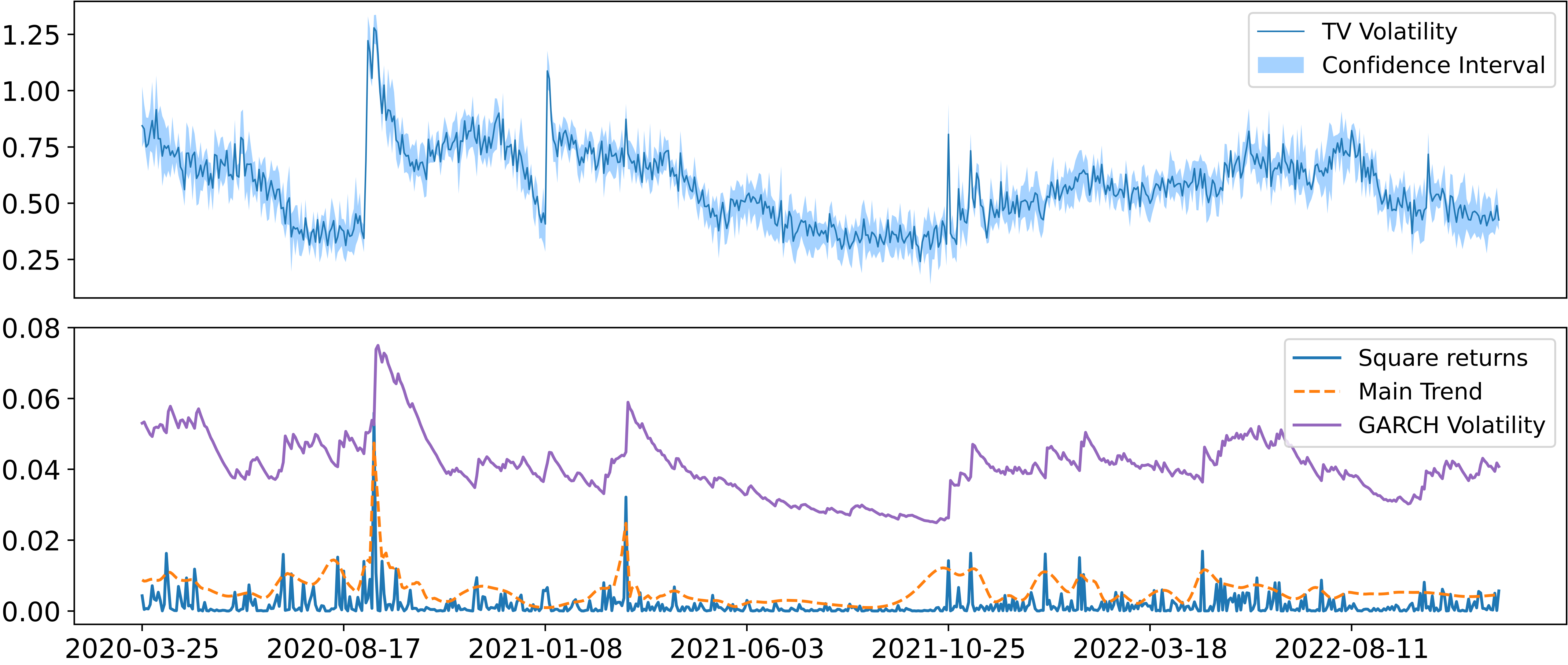}
    \caption{Comparison Chart about TV volatility and GARCH volatility of TSLA}
\end{figure}

\begin{figure}[htbp]
    \centering
    \includegraphics[width=\linewidth]{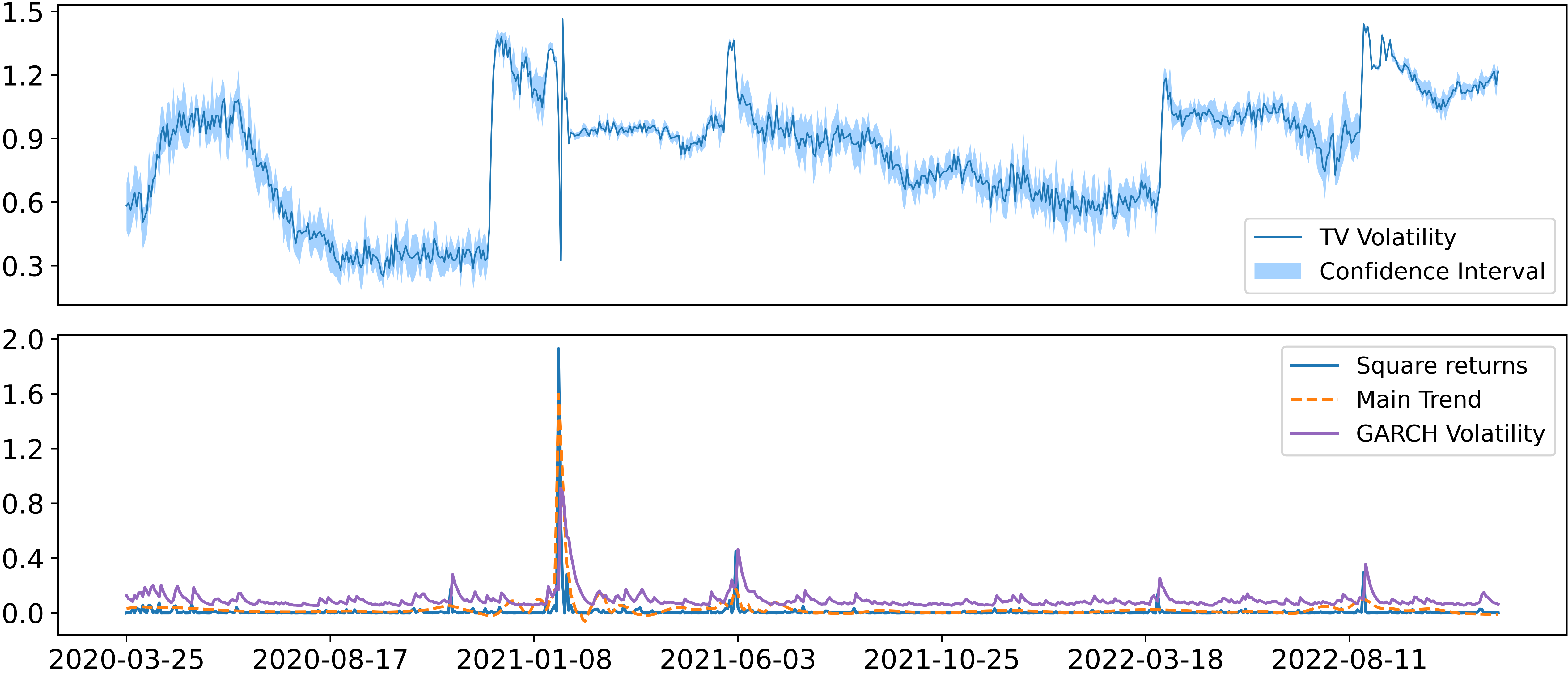}
    \caption{Comparison Chart about TV volatility and GARCH volatility of AMC}
\end{figure}

It's easy to find GARCH volatility is very close to the main trend of square returns, which means this indicator can reflect the 
instantaneous fluctuations of securities wonderfully. However, a huge shock from financial market can only effect the current volatility.
In practical experience, investors' confidence in the market will change dramatically after huge shock, and it always spends long time to
return to its previous stable level. During this period, the volatility (or risk) should also stay at high level. The short-sightedness 
of GARCH model maybe mislead investors possibly. Unfortunately, GARCH model with higher p and q will lead it's difficult to estimate the 
parameter, and it can't keenly catch the changes from instantaneous huge pulse. 

When the malfunction of traditional indicators is serious, TV volatility maybe provide another useful opinion to quantify the 
fluctuations of the security price. On one hand, TV volatility can reflect the influence caused by huge financial market shock, and the
change can last for a while, which is more convenient to discern volatility risk of financial market. On the other hand, the trend of 
TV volatility won't be too flat and smooth to reflect the changing trends of returns acutely. So, it can become a excellent indicator to 
measure the volatility of securities during a period.

\section{Conclusion}\label{sec5}
In this paper, the ergodicity and convergence bound of nonlinear Markov Chains is analyzed. According to some assumptions, nMC has been
transformed to normal linear Markov Chains. Then, the spectral radius of Coupling Markov transition probability matrix is applied to 
enhance the convergence condition. Finally, it gets the main result through optimization and triangle inequalities. There are two specific
examples proposed for testing and explaining this method. And it's effect to estimate convergence bounds has already surpassed the 
previous classic M-D condition. Moreover, it proposes a new indicator "TV Volatility" built on the condition. It's a excellent indicator
for investors to measure the risk and fluctuations of the securities in a period through the comparison between TV volatility and GARCH
volatility. 

As a very general model, Markov Chain has many other application scenarios besides quantitative finance. In these field, the analysis
about the convergence and ergodicity migth be included. So, the algorithm can provide a useful tool to do further research about epidemic
model, reinforcement and etc. Additionally, it's associated with time series data. It's also a valuable problem to investigate its utility
in other data modalities.

\bibliographystyle{unsrt}
\bibliography{article.bib}

\begin{appendices}
\section{The proof of Lemma 1}\label{appendixA}
Consider an arbitrary bounded measurable function $\mathscr{F}$. According to the definition (\ref{X1X2}), it always exists:
\begin{align*}
    \mathbb{E}(\mathscr{F}(\tilde{X}^1)) &=\mathbb{E}(\mathscr{F}(\tilde{X}^1)\mathbb{I}(\zeta=0))+
    \mathbb{E}(\mathscr{F}(\tilde{X}^1)\mathbb{I}(\zeta \neq 0)) \\
    & =\mathbb{E}(\mathscr{F}(\xi)\mathbb{I}(\zeta=0))+ \mathbb{E}(\mathscr{F}(\eta^1)\mathbb{I}(\zeta \neq 0)) \\
    & =\mathbb{E}(\mathscr{F}(\xi))\mathbb{E}(\mathbb{I}(\zeta=0))+\mathbb{E}(\mathscr{F}(\eta^1))\mathbb{E}(\mathbb{I}(\zeta \neq 0)) \\
    & =q \int \mathscr{F}(x)p^{\xi}(x)\varLambda(dx) + (1-q) \int \mathscr{F}(x)p^{\eta^1}(x)\varLambda(dx) \\
    & =q\int\mathscr{F}(x)\frac{p^1\wedge p^2}{q}(x)\varLambda(dx) + (1-q)\int\mathscr{F}(x)\frac{p^1-p^1\wedge p^2}{1-q}(x)\varLambda(dx)\\
    & =\int \mathscr{F}(x)p^1(x)\varLambda(dx)=\mathbb{E}(\mathscr{F}(X^1))
\end{align*}
On the other hand, the expection of $\mathscr{F}(\tilde{X}^1)$ can be expressed according to the definition.
\begin{equation*}
    \mathbb{E}(\mathscr{F}(\tilde{X}^1)) := \int \mathscr{F}(x)\tilde{p}^1(x)\varLambda(dx) = \int \mathscr{F}(x)p^1(x)\varLambda(dx)
\end{equation*}
Moreover, $Law(\tilde{X}^1)=Law(X^1)$ is equivalent to $\tilde{p}^1(x)\equiv p^1(x)$. Then, it try to provide the proof by contradiction.
It assume $\tilde{p}^1(x)\equiv p^1(x)$ is false, thus $\exists x_0$, $\tilde{p}^1(x_0)\neq p^1(x_0)$. Because $\mathscr{F}$ is arbitrary, 
it could be defined as:
\begin{equation*}
    \mathscr{F}(x) := 
    \begin{cases}
        1,\quad x_0-\varepsilon \leq x \leq x_0+\varepsilon \\
        0,\quad others
    \end{cases}
\end{equation*}
where $\varepsilon$ is small enough but isn't equal to 0. So, 
\begin{equation*}
    \int_{x_0-\varepsilon}^{x_0+\varepsilon}\tilde{p}^1(x)\varLambda(dx) = 
    \int_{x_0-\varepsilon}^{x_0+\varepsilon}p^1(x)\varLambda(dx)
\end{equation*}
By mean value theorem, $\exists \xi \in (x_0-\varepsilon,x_0+\varepsilon)$, $(\tilde{p}^1(\xi)-p^1(\xi))2\varepsilon=0$. It must have the 
equation $\tilde{p}^1(\xi)-p^1(\xi)=0$. When the $\varepsilon$ becomes smaller and smaller, $\xi$ infinitely approachs to $x_0$. Then, it 
obtains a contradiction and the assumption is wrong. Similarly, $\tilde{X}^2$ can be proved as the same method. So, the lemma \ref{lemma1} 
has been demonstrated. \qed

\section{The proof of Lemma 2}\label{appendixB} 
$\tilde{X}_n^1,\tilde{X}_n^2$ can share the distribution with $X_n^1,X_n^2$ according to lemma \ref{lemma1}. So, the prominent point to 
prove this lemma is to prove they have the same transition probability. Considering a state $i\in E$ (from the state space 
$\mathscr{S}(E,\mathscr{E})$), it always exists:
\begin{align*}
    P(\tilde{X}_{n+1}^1=i|\tilde{X}_n^1,\tilde{X}_n^2) &= 
    \mathbb{I}(\tilde{X}_n^1\neq\tilde{X}_n^2)P(\tilde{X}_{n+1}^1=i|\tilde{X}_n^1,\tilde{X}_n^2,\tilde{X}_n^1\neq\tilde{X}_n^2) \\
    &+ \mathbb{I}(\tilde{X}_n^1=\tilde{X}_n^2)P(\tilde{X}_{n+1}^1=i|\tilde{X}_n^1,\tilde{X}_n^2,\tilde{X}_n^1 = \tilde{X}_n^2)
\end{align*}

When $\tilde{X}_n^1\neq\tilde{X}_n^2$, it's easy to find $\zeta_n=1$, $\eta_n^1\neq\eta_n^2$ and $\xi_n$ can be set as any value. If 
$\tilde{X}_{n+1}^1=i$, there are two different situation that $\zeta_{n+1}=1,\eta_{n+1}^1=i$ or $\zeta_{n+1}=0,\xi_{n+1}=i$. Because of
the marginal transition probability definition, the equation can be written further.
\begin{align*}
    P(\tilde{X}_{n+1}^1=i|&\tilde{X}_n^1,\tilde{X}_n^2,\tilde{X}_n^1\neq\tilde{X}_n^2) =
    P(\tilde{X}_{n+1}^1=i|\tilde{X}_n^1=\eta_n^1,\tilde{X}_n^2=\eta_n^2,\eta_n^1\neq\eta_n^2) \\
    &=P^{\zeta}(1,1)P^{\eta^1}(\eta_n^1,i) + P^{\zeta}(1,0)P^{\xi}(\xi_n,i) \\
    &=(1-\kappa(\eta_n^1,\eta_n^2))\frac{P(\eta_n^1,i)-P(\eta_n^1,i)\wedge P(\eta_n^2,i)}{1-\kappa(\eta_n^1,\eta_n^2)} +
    \kappa(\eta_n^1,\eta_n^2)\frac{P(\eta_n^1,i)\wedge P(\eta_n^2,i)}{\kappa(\eta_n^1,\eta_n^2)} \\
    &=P(\eta_n^1,i)-P(\eta_n^1,i)\wedge P(\eta_n^2,i) + P(\eta_n^1,i)\wedge P(\eta_n^2,i) = P(\eta_n^1,i)
\end{align*}

When $\tilde{X}_n^1=\tilde{X}_n^2$, $(\tilde{X}_n^1,\tilde{X}_n^2)$ has reached a steady state. So, $\tilde{X}_n^1=\tilde{X}_n^2=\xi_n$ 
and $\zeta_n=0$. Thus, 
\begin{align*}
    P(\tilde{X}_{n+1}^1=i|\tilde{X}_n^1,\tilde{X}_n^2,\tilde{X}_n^1=\tilde{X}_n^2) &= 
    P(\tilde{X}_{n+1}^1=i|\tilde{X}_n^1,\tilde{X}_n^2,\tilde{X}_n^1=\tilde{X}_n^2=\xi_n) \\
    &=P^{\zeta}(0,1)P^{\eta^1}(\eta_n^1,i) + P^{\zeta}(0,0)P^{\xi}(\xi_n,i) = P^{\xi}(\xi_n,i)\mathbb{I}(\zeta_n=0) \\
    &=P(\xi_n,i) = P(\eta_n^1,i)
\end{align*}

Then, the transition probability can be expressed as:
\begin{align*}
    P(\tilde{X}_{n+1}^1=i|\tilde{X}_n^1,\tilde{X}_n^2)=\mathbb{I}(\tilde{X}_n^1\neq\tilde{X}_n^2)P(\eta_n^1,i) + 
    \mathbb{I}(\tilde{X}_n^1 = \tilde{X}_n^2)P(\eta_n^1,i)=P(\eta_n^1,i)
\end{align*}

It means that $\tilde{X}_n^1$ can be generated as $X_n^1$. Similarly, $\tilde{X}_n^2$ has the same property. So, the lemma \ref{lemma2}
has been demonstrated. \qed
\end{appendices}

\end{document}